\documentclass[12pt]{article}

\usepackage{amscd}
\usepackage{latexsym,amsmath,amssymb,amsbsy,amsthm}
\usepackage[square]{natbib}

\usepackage{hyperref}
\hypersetup{
    unicode=false,          
    pdftoolbar=true,        
    pdfmenubar=true,        
    pdffitwindow=false,     
    pdfstartview={FitH},    
    pdftitle={},    
    pdfauthor={Soumendu Sundar Mukherjee},     
    pdfsubject={Mathematics},   
    pdfcreator={Creator},   
    pdfproducer={Producer}, 
    pdfkeywords={keyword1} {key2} {key3}, 
    pdfnewwindow=true,      
    colorlinks=true,       
    linktoc=page,
    linkcolor=blue,          
    citecolor=blue,        
    filecolor=green,      
    urlcolor=cyan,           
    linkbordercolor={1 1 1},
    citebordercolor={0 1 0},
    urlbordercolor={0 1 1}
}

\usepackage{lscape,fancyhdr,fancybox}
\usepackage{graphicx,epsfig, xy}
\usepackage{color}
\usepackage{amsfonts}
\usepackage[hmarginratio=1:1, vmarginratio =1:1,
textheight=22cm, textwidth=15cm]{geometry}

\usepackage[title,titletoc,toc]{appendix}

\numberwithin{equation}{section}

\DeclareMathOperator{\Var}{Var}
\DeclareMathOperator{\tr}{tr}

\newtheorem{prop}{Proposition}[section]
\theoremstyle{plain}

\theoremstyle{plain}
\newtheorem{theorem}{Theorem}[section]
\theoremstyle{plain}
\newtheorem{lemma}{Lemma}[section]
\theoremstyle{plain}

\theoremstyle{plain}
\newtheorem{exm}{Example}[section]
\theoremstyle{remark}
\newtheorem{remark}{Remark}[section]
\theoremstyle{remark}

\newcommand{\E}{\mathbb{E}}

\begin{document}

\def\shorttitle{Bulk behaviour of Schur-Hadamard products}
 \def\shortauthors{Arup Bose and Soumendu Sundar Mukherjee}
 \fancyhf{}
 \renewcommand\headrulewidth{0pt}
 \fancyhead[C]{
 \ifodd\value{page}
   \small\scshape\shortauthors
 \else
   \small\scshape\shorttitle
 \fi
 }
 \fancyhead[L]{
   \ifodd\value{page}
     \thepage
   \fi
   }
 \fancyhead[R]{
   \ifodd\value{page}
   \else
      \thepage
     \fi
     }
 \pagestyle{fancy}
  
\title{\textbf{\LARGE \sc
Bulk behaviour of Schur-Hadamard products of symmetric random matrices}}
 \author{
 \parbox[t]{0.50\textwidth}{{\sc Arup Bose}
 \thanks{Research  supported by J.C.~Bose National Fellowship, Dept.~of Science and Technology, Govt.~of India.}
 \\ {\small Statistics and Mathematics Unit\\ Indian Statistical Institute\\ 203 B. T.~Road, Kolkata 700108\\ INDIA\\  E-mail: \texttt{bosearu@gmail.com}\\}}
\parbox[t]{0.5\textwidth}{{\sc Soumendu Sundar Mukherjee}
  \\ {\small Master of Statistics student\\ Indian Statistical Institute\\ 203 B. T.~Road, Kolkata 700108\\ INDIA\\  E-mail: \texttt{soumendu041@gmail.com}\\}}}

\date{First version February 10, 2014\\ This version March 12, 2014}

\maketitle
\begin{abstract}
We develop a general method for establishing the existence of the Limiting Spectral Distributions (LSD) of Schur-Hadamard products of independent symmetric patterned random matrices. We apply this method to show that the LSDs of Schur-Hadamard products of some common patterned matrices exist and identify the limits. In particular, the Schur-Hadamard product of independent Toeplitz and Hankel matrices has the semi-circular LSD. We also prove an invariance theorem that may be used to find the LSD in many examples.
\end{abstract}
   \medskip

\noindent \textbf{Key words and phrases.}  Patterned matrices, Schur-Hadamard product, limiting spectral distribution, Toeplitz, Wigner, Hankel, Circulant matrices, semi-circular law.\medskip

\noindent{\bf AMS 2010 Subject Classifications.} Primary 15B52, 60B20; secondary 60B10, 60F99, 60B99.\bigskip
\bigskip

\section{Introduction}\label{sec:intro}
Let $A_n$ be an  $n\times n$ 
matrix with eigenvalues $\lambda_1, \ldots , \lambda_n$. 
The empirical spectral measure $\mu_n$ of $A_n$ is the
random
measure 
\begin{equation}
\mu_n= \frac{1}{n} \sum_{i=1}^n \delta_{ \lambda_i},
\end{equation}
where $\delta_{x}$ is the Dirac delta measure at $x$. The
corresponding random
probability distribution function  is known as the \emph{Empirical
Spectral Distribution} (ESD) and  denoted by
$F^{A_n}$.
The sequence $\{F^{A_n}\}$ is said to converge (weakly) almost surely to a non-random 
distribution function $F$ if, outside a null set, as $n \to \infty$, $F^{A_n}(\cdot) \rightarrow F(\cdot)$ at all
continuity points of $F$. $F$ is known as the \textit{Limiting Spectral Distribution} (LSD). If the latter convergence is in probability, then the weak convergence is said to hold in probability.

There has been a lot of recent work on obtaining the LSDs  of
large dimensional patterned random matrices. These matrices may be defined as follows. Let $\{ x_i,  x_{i,j}\ i, j \geq 0\}$ be a sequence of random variables, 
 called an  \textit{input sequence}. Let 
 $\mathbb Z$ be the set of all integers and let 
 $\mathbb Z_{+}$ be the set of all non-negative integers. Let
\begin{equation}\label{eq:link}
L_n: \{1, 2, \ldots n\}^2 \to \mathbb Z_{+} \ \text{(or} \ \ \mathbb Z_{+}^2) \  n \geq 1, 
\end{equation} be a sequence of functions. We write $L_n=L$ and call it the \textit{link} function and by abuse of
notation we write $\mathbb Z_{+}^2$ as the common domain of $\{ L_n
\}$. Matrices  of the form
\begin{equation}\label{eq:patternmatrix}
A_n=n^{-1/2}((x_{L(i,j)}))
\end{equation}
are called \textit{patterned random matrices}. 
If
$L(i,j)=L(j,i)$ for all $i,j$,  then the matrix is symmetric. In this article we shall denote the LSD of $\{n^{-1/2}A_n\}$, if it exists, by $\mathcal{L}_{A}$.

There are a host of LSD results for  real symmetric patterned random matrices. 
See, for example, \citet{bose2008another} for a detailed description. The symmetric patterned matrices that have received particular attention in the literature are the Wigner, Toeplitz, Hankel,  Reverse Circulant and the Symmetric Circulant matrices. Their link functions is given in Table~\ref{table1}. 
\begin{table}[htbp]
\centering
\begin{tabular}{|c|c|c|}
\hline
Matrix & Notation & Link function\\
\hline
Wigner & $W_n$ & $L_W(i,j)=(\min\{i,j\},\max\{i,j\})$\\
Toeplitz & $T_n$ &$L_T(i,j)=|i-j|$\\
Hankel & $H_n$ & $L_H(i,j)=i+j$\\
Symmetric Circulant & $SC_n$ & $L_{SC}(i,j)=\frac{n}{2}-|\frac{n}{2}-|i-j||$\\
Reverse Circulant & $RC_n$ & $L_{RC}(i,j)=(i+j)(\text{mod } n)$\\
Doubly Symmetric Hankel & $DH_n$ & $L_{DH}(i,j)=\frac{n}{2}-|\frac{n}{2}-(i+j)(\text{mod } n)|$\\
\hline
\end{tabular}
\vskip10pt
\caption{Some common symmetric patterned matrices and their link functions.}
\label{table1}
\end{table}

LSD existence is also known for the upper triangular versions of these matrices \citep{basu2012spectral}. Joint convergence in terms of convergence of moments of all polynomials of these matrices has also been established in varying degrees (see, for example, \citet{bose2011convergence,basu2012joint}). 

However, the Schur-Hadamard (entrywise) product of such matrices does not seem to have been dealt with in any systematic manner. Such matrices have come up in the random matrix literature in specific situations. For example, \citet{bai2007semicircle} considered the problem of finding the LSD of a sparse sample covariance $S$-matrix. They modeled the sparsity by taking Schur-Hadamard product with a sparse $0$-$1$ Wigner matrix and established the semi-circular law as the LSD of the resulting sparse $S$-matrix under Lindeberg-type conditions on the matrix-entries.

More recently, \citet{beckwith2011distribution} considered Schur-Hadamard product of a $\pm 1$ Bernoulli$(p)$ Wigner matrix with a Toeplitz matrix and established the existence of the LSD. They found that when $p=0.5$, the LSD is the familiar semi-circular law, but when $p\neq 0.5$, the limiting moments are polynomials in $(2p-1)$ whose coefficients could not be identified, and as $p$ approaches $1$ these moments approach the corresponding moments of the LSD of the Toeplitz matrix. \citet{goldmakher2013spectral} considered randomly weighted sequences of $d$-regular graphs with size growing to $\infty$, which amounts to taking Schur-Hadamard product of random real symmetric weight matrices with the adjacency matrices of the graphs, and established the existence of a limiting spectral distribution that depends only on $d$ and the distribution of the weights, under the usual decay condition on the number of $k$-cycles relative to the graph-size, for each $k\geqslant 3$ (in the unweighted case, the limiting spectral distribution is the well-known Kesten's measure).

In this article we shall consider Schur-Hadamard products of real symmetric patterned matrices and establish results on their LSD. In particular, we prove an invariance theorem which yields the result of \citet{beckwith2011distribution}, when $p=0.5$, as a special case. We also consider the Schur-Hadamard product of Toeplitz and Hankel matrices (and other combinations like Toeplitz and Reverse Circulant etc.) and show that the LSD is the semi-circular law. Table~\ref{table2} summarizes our results about the six patterned matrices mentioned in Table~\ref{table1}.
\begin{table}[htbp]
\centering
\begin{tabular}{|c|c|c|}
\hline
$X_n$ & $Y_n$ & LSD\\
\hline
$W_n$ & $T_n$, $H_n$, $SC_n$, $RC_n$, $DH_n$  & $\mathcal{L}_W$\\
$T_n$, $SC_n$ & $H_n$, $RC_n$, $DH_n$ & $\mathcal{L}_W$\\
$T_n$ & $SC_n$ & $\mathcal{L}_T$\\
$H_n$ & $RC_n$, $DH_n$ & $\mathcal{L}_H$\\
$RC_n$ & $DH_n$ & $\mathcal{L}_{RC}$\\
\hline
\end{tabular}
\vskip10pt
\caption{LSDs of several Schur-Hadamard products.}
\label{table2}
\end{table}
\section{Preliminaries}\label{sec:prelims}
We shall use the method of moments to establish the existence of the LSD. For any matrix $A$, let $\beta_h(A)$ denote the $h$-th moment of the ESD of $A$. 
The following lemma, which is easy to prove, will be useful.
\begin{lemma}\label{lem:main}
Let $\{A_n\}$ be a sequence of random matrices with all real eigenvalues. Suppose there exists a sequence $\beta_h$ such that \vskip10pt

\noindent 
$(i)$  For every $h\geq 1$, $\E(\beta_h(A_n))\rightarrow \beta_h$, \vskip10pt

\noindent $(ii)$  $\Var(\beta_h(A_n))\rightarrow 0$ for every $h\geq 1$ and \vskip10pt

\noindent 
$(iii)$ the sequence $\{\beta_h\}$ satisfies Carleman's condition,  $\sum\beta_{2h}^{-1/2h}=\infty$.\vskip10pt

Then the LSD of $F^{A_n}$ exists in probability and equals $F$ with moments $\{\beta_h\}$. If in place of $(ii)$, $\beta_h$ satisfies the stronger condition\vskip10pt

\noindent $(ii')$  $\sum_{n=1}^{\infty}\E[\beta_h(A_n)-\E(\beta_h(A_n))]^4<\infty$ for every $h\geq 1$, \vskip10pt

then the LSD exists in the almost sure sense.
\end{lemma}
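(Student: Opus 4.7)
The plan is to deploy the classical method of moments in two steps: first show that the random moments $\beta_h(A_n)$ converge to $\beta_h$ in the appropriate probabilistic mode (in probability under $(i){+}(ii)$, almost surely under $(i){+}(ii')$), and then upgrade this to weak convergence of the ESDs using Carleman's condition.

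First, since $\{\beta_h\}$ satisfies Carleman's condition by $(iii)$, the classical Hamburger moment problem guarantees that there is a unique probability distribution $F$ on $\R$ whose $h$-th moment is $\beta_h$ for every $h\geq 1$. This gives us a concrete candidate for the LSD and reduces the problem to showing that the random sequence $\{\beta_h(A_n)\}$ converges to $\beta_h$.

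For the in-probability statement, I would combine $(i)$ and $(ii)$ via Chebyshev: for any $\varepsilon>0$ and fixed $h$,
\[
\Pr\bigl(|\beta_h(A_n)-\beta_h|>\varepsilon\bigr)\leq \Pr\bigl(|\beta_h(A_n)-\E\beta_h(A_n)|>\varepsilon/2\bigr) + \mathbf{1}\bigl\{|\E\beta_h(A_n)-\beta_h|>\varepsilon/2\bigr\},
\]
and the first term is bounded by $4\Var(\beta_h(A_n))/\varepsilon^2\to 0$ by $(ii)$ while the indicator is eventually $0$ by $(i)$. Hence $\beta_h(A_n)\to\beta_h$ in probability for each $h$. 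For the almost sure statement, I would instead apply Markov's inequality at the fourth power together with $(ii')$:
\[
\sum_{n=1}^\infty\Pr\bigl(|\beta_h(A_n)-\E\beta_h(A_n)|>\varepsilon\bigr)\leq \varepsilon^{-4}\sum_{n=1}^\infty \E[\beta_h(A_n)-\E\beta_h(A_n)]^4 < \infty,
\]
so Borel--Cantelli yields $\beta_h(A_n)-\E\beta_h(A_n)\to 0$ almost surely; combined with $(i)$ this gives $\beta_h(A_n)\to\beta_h$ almost surely for each fixed $h$, and intersecting these probability-one events over the countable index set $h\in\N$ produces a single null set off which the convergence holds for all $h$ simultaneously.

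The remaining step, and the main conceptual obstacle, is to pass from convergence of individual moments to weak convergence of the random measures $F^{A_n}$ in the appropriate probabilistic sense. The deterministic version of this passage is standard: if $G_n$ is a sequence of probability distributions whose moments exist and converge to those of a distribution $F$ uniquely determined by its moments, then $G_n\Rightarrow F$. I would invoke this fact pointwise. In the a.s.~case, on the probability-one event on which $\beta_h(A_n)\to\beta_h$ for every $h$, this directly gives $F^{A_n}\Rightarrow F$. In the in-probability case, the argument is slightly more delicate: working along subsequences, any subsequence $\{n_k\}$ contains a further subsequence along which $\beta_h(A_{n_{k_\ell}})\to\beta_h$ almost surely for every $h$ (by a diagonal extraction over the countable set of moments), hence $F^{A_{n_{k_\ell}}}\Rightarrow F$ almost surely; since every subsequence has such a further a.s.~convergent sub-subsequence, this is equivalent to weak convergence of $F^{A_n}$ to $F$ in probability. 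This subsequence argument is the one genuinely non-trivial piece; once it is in place the lemma follows.
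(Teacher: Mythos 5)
Your proof is correct. The paper offers no proof of this lemma at all---it is introduced with ``which is easy to prove''---and what you have written is precisely the standard method-of-moments argument the authors are implicitly invoking: Chebyshev with $(ii)$ for in-probability convergence of each $\beta_h(A_n)$, fourth-moment Markov plus Borel--Cantelli with $(ii')$ for almost sure convergence (intersected over the countably many $h$), Carleman's condition for determinacy of $F$, the Fr\'echet--Shohat moment convergence theorem for the deterministic passage from moments to weak convergence, and the sub-subsequence characterization of convergence in probability to recover the in-probability mode of weak convergence. The one step you assert without justification---that a probability distribution $F$ with moments $\{\beta_h\}$ \emph{exists}, and not merely that such a distribution would be unique---is harmless: $\{\beta_h\}$ is a pointwise limit of moment sequences of probability measures, so positive semidefiniteness of the associated Hankel forms passes to the limit and Hamburger's theorem applies (alternatively, existence of the limit law is part of the conclusion of the Fr\'echet--Shohat theorem you invoke), so there is no gap.
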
 

We shall consider three assumptions on the input sequence. \vskip10pt

\noindent {\bf (A1).} The input random variables are independent and uniformly bounded with mean $0$, and variance $1$.\vskip10pt
\noindent {\bf (A2).} The input random variables are i.i.d. with mean $0$ and variance $1$.\vskip10pt
\noindent {\bf (A3).} The input random variables are independent with mean $0$ and variance $1$, and with uniformly bounded moments of all orders.\vskip10pt

We now quickly recollect some terminology and notation from the general theory of patterned matrices
(see \citet{bose2008another}). 

A link function $L$ is said to satisfy \textit{Property B} if 
\[
\Delta_L:=\sup_n\sup_{t\in \text{range}(L)}\sup_{1\leqslant k\leqslant n} \#  \{l \mid 1\leqslant\ l \leqslant n,\,  L(K,l)=t \} <\infty.
\]
In other words, the total number of times any particular variable appears in any row is uniformly bounded. All the matrices introduced so far satisfy this property. For example, $\Delta_{L_W}=1$ and $\Delta_{L_T}=2$. 

Let  $L$ be the link function of the matrix $A_n$. Define
\[
k^A_n:=\#\{L_n(i,j)\, \mid \, 1\leq i, j \leq n\},
\]
and
\[
\alpha^A_n:=\max_k \#\{(i,j)\, \mid \, L_n(i,j)=k\}.
\]
Consider the following conditions on $L$:
\begin{equation}\label{Lcond}
k^A_n  \rightarrow \infty \,\,\, \text{ and }\,\,\, k^A_n\alpha^A_n  =O(n^{2}),
\end{equation}
where $O(\cdot)$ is the Landau big ``Oh" notation: for two real valued functions $f$ and $g$ defined on the set of integers, one writes $f(n)=O(g(n))$ if there is a constant $C$ independent of $n$ such that $|f(n)|\leqslant C |g(n)|$ for all $n>N_0$ for some integer $N_0$. Often the constant $C$ and the ``cut-off" $N_0$ depend on other ``parameters'', say $\alpha$, of the problem at hand, and one makes this explicit by writing $f(n)=O_{\alpha}(g(n))$. For instance, we might have written in (\ref{Lcond}) that $k^A_n\alpha^A_n  =O_L(n^{2})$, because the constant  implied by the big ``Oh'' might depend on the link function. However, from now on we shall suppress these ``parameters'' to avoid notational clutter. 
Note that all the link functions introduced so far satisfy the above conditions. It is known that if the LSD of $\{n^{-1/2}A_n\}$ exists under Assumption (A1), then the same LSD continues to hold under (A2)  or (A3), provided the link function satisfies Property B and  Conditions (\ref{Lcond}). The same continues to be true for Schur-Hadamard products. Thus, in our arguments, without loss of any generality, we assume that (A1) holds. Traditionally, LSD results are stated under (A1),  and (A3) is appropriate while studying the joint convergence of more than one sequence of matrices. 

The \emph{Moment-Trace Formula} plays a key role in this approach. A function $$\pi : \{0, 1, \cdots, h\} \rightarrow \{1,2,\cdots,n\}$$ with $\pi(0) = \pi(h)$ is  called a \emph{circuit} 
of
length $h$. The dependence of a circuit on $h$ and $n$ is suppressed. Then
\begin{equation}
\beta_h(A)= \frac{1}{n}\tr(A^h)=\frac{1}{n}\sum_{\pi \text{ circuit of length $h$}}a_\pi,
\end{equation}
where 
\[
a_{\pi} := a_{L(\pi(0),\pi(1))}a_{L(\pi(1),\pi(2))}\ldots a_{L(\pi(h-1),\pi(h))}.
\]

If $L(\pi(i-1),\pi(i))=L(\pi(j-1),\pi(j))$, with $i<j$, we shall use the notation $(i,j)$ to denote such a match of the $L$-values. Also if $L$ is the link function of $A_n$, we will often use the further shorthand notation $i\sim^A j$ in lieu of the phrase ``$(i,j)$ is an $L$-match''. If an $L$-value is repeated exactly $e$ times, we say that the circuit $\pi$ has an edge (or $L$-edge) of order $e$ ($1 \leqslant e \leqslant h$). If
$\pi$ has all $e \geqslant 2$, then it is called $L$-matched (in short matched). If $\pi$ has only order two edges, then it is called pair-matched. From the general theory, it follows that only pair-matched circuits are relevant when computing limits of moments. 

To deal with Conditions $(ii)$ and $(ii')$ of Lemma~\ref{lem:main}, we need multiple circuits: $t$ circuits $\pi_1, \pi_2,$ $\cdots,\pi_t$ are \emph{jointly $L$-matched} if each $L$-value occurs at least twice across all circuits. They are \emph{across $L$-matched} if each circuit has at least one $L$-value which occurs in at least one of the other circuits. 
 
Two circuits $\pi_1$ and $\pi_2$ are equivalent if and only if their $L$-values respectively match at the
same locations, i.e., if for all $i, j$,
\[
L(\pi_1(i-1),\pi_1(i))=L(\pi_1(j-1),\pi_1(j)) \Leftrightarrow L(\pi_2(i-1),\pi_2(i))=L(\pi_2(j-1),\pi_2(j)).
\]

Any equivalence class can be indexed by a partition of $\{1,2,\cdots, h\}$. We label these partitions by \emph{words} of length $h$ of letters where the first occurrence of each letter is in alphabetical order. For example, if $h = 4$ then the partition
$\{\{1,3\},\{2,4\}\}$ is represented by the word $abab$. This identifies all circuits $\pi$ for
which $L(\pi(0),\pi(1)) = L(\pi(2), \pi(3))$ and $L(\pi(1),\pi(2)) = L(\pi(3),\pi(1))$.
Let $w[i]$ denote the $i$-th entry of $w$. The equivalence class corresponding to $w$ is 
\[
\Pi(w) := \{\pi \mid w[i]=w[j]\Leftrightarrow L(\pi(i-1),\pi(i))=L(\pi(j-1),\pi(j))\}.
\]
Note that the number of partition blocks corresponding to $w$ is same as the number of distinct letters in $w$, which we denote by $|w|$. 
By varying $w$, we obtain all the equivalence classes. It is important to note that for any
fixed $h$, even as $n\rightarrow \infty$, the number of words remains finite but
the number of circuits in any given $\Pi(w)$ may grow indefinitely. Henceforth, we shall denote the set of all words of length $h$ by $\mathcal{A}_{h}$. 
Notion of matches carry over to words and we shall again use the notation $(i,j)$, $i<j$ to denote the match $w[i]=w[j]$ in $w$. Note that a word is \emph{pair-matched} if every letter 
appears exactly twice in that word. The set of all pair-matched words of length $2k$
is  denoted by $\mathcal{W}_{2k}$.
For technical reasons it is often easier to deal with a class larger than $\Pi(w)$:
\[
\Pi^*(w) = \{\pi \mid w[i]=w[j]\Rightarrow L(\pi(i-1),\pi(i))=L(\pi(j-1),\pi(j))\}.
\]

Any $i$ (or $\pi(i)$ by abuse of notation) is a \emph{vertex}. It is \emph{generating} if either $i = 0$
or $w[i]$ is the first occurrence of a letter. Otherwise, it is called non-generating. For
example, if $w = abbcab$ then $\pi(0), \pi(1), \pi(2), \pi(4)$ are generating and $\pi(3), \pi(5), \pi(6)$
are non-generating. 
By Property B a circuit is completely determined, up to finitely many choices, by its generating vertices. The number of generating vertices in any circuit in $\Pi(w)$ is $|w| + 1$
and hence
\[
\#\Pi(w) \leqslant \#\Pi^*(w)= O(n^{|w|+1}).
\]
The set of generating vertices (indices) is denoted by $S$. The dependence on the word $w$ will, in general, be clear from the context. Sometimes we shall write $\Pi_A(w)$, $\Pi^*_A(w)$ or $S_A$ to emphasise dependence on the matrix $A_n$.  

From the general theory, it follows that for a sequence of patterned random matrices $\{n^{-1/2}A_n\}$ the LSD exists if for all  $w \in \mathcal{W}_{2k}$, the following limit exists:
\begin{equation}\label{wordlimit}
p(w)=\lim n^{-(1+k)}\# \Pi(w)=\lim n^{-(1+k)}\# \Pi^*(w)
\end{equation}
and in that case the $2k$-th moment of the LSD is given by
$$\beta_{2k}=\sum_{w\in \mathcal{W}_{2k}} p(w).$$
The existence of the LSD for the Wigner, Hankel, Toeplitz, Reverse Circulant, Symmetric Circulant and Doubly Symmetric Hankel matrices can be established by verifying that for every $k$, $p(w)$ exists for every pair-matched word (see \citet{bose2008another,bose2010patterned}). 

In the next section we extend the above approach for a single sequence to the Schur-Hadamard product of two sequences. It may be noted that although we work with only two sequences, it is quite straightforward to extend the results to any finite number of sequences. 

\section{Schur-Hadamard product}\label{sec:shproduct} 
Suppose  $\{X_n\}$ and $\{Y_n\}$ are two independent sequences of patterned symmetric random matrices with link functions  $L_X$ and $L_Y$ respectively and their input sequences satisfy (A1). Let $Z_n=X_n\odot Y_n $ be their Schur-Hadamard product. 
We shall employ the moment method via the word approach to study the LSD of 
$\{n^{-1/2}Z_n\}$. Note that $Z_n$ is not necessarily a patterned matrix. However, 
many of the arguments of the general theory for a single matrix may be used for $Z_n$ with appropriate modifications. 
 
Define
\[
k_n^Z :=\#\{(L_X(i,j),L_Y(i,j))\, \mid \, 1\leq i, j \leq n\},
\]
i.e., $k_n^Z$ is the total number of the $X$ and $Y$ variable pairs appearing in the matrix $Z_n$. Also let
\[
\alpha_n^Z :=\max_{(k,l)}\#\{(i,j)\, \mid \, (L_X(i,j), L_Y(i,j))=(k,l)\},
\]
i.e., $\alpha_n^Z$ is the maximum number of occurrences of any $X$ and $Y$ variable pair in $Z_n$.

It is easy to show that if $L_X$ and $L_Y$ both satisfy 
(\ref{Lcond}), then 
$\{k_n^Z, \alpha_n^Z\}$ also satisfy these conditions.

To elaborate, note that
\[
\max\{k_n^X,k_n^Y\} \leqslant k_n^Z \leqslant k_n^X + k_n^Y.
\]
So $k_n^Z\rightarrow \infty$.

Similarly, it is obvious that
\[
 \alpha_n^Z \leqslant \min\{\alpha_n^X,\alpha_n^Y\}.
\]
Therefore
\begin{align*}
k_n^Z \alpha_n^Z & \leqslant (k_n^X + k_n^Y) \min\{\alpha_n^X,\alpha_n^Y\}\\
                             & \leqslant \alpha_n^X k_n^X + \alpha_n^Y k_n^Y\\
                             & = O(n^2)+O(n^2)=O(n^2),
\end{align*}
which completes the verification of (\ref{Lcond}).

For the time being assume \textit{all moments exist}. Then the \emph{moment trace formula} for $n^{-1/2}Z_n$ becomes
\begin{align*}
\beta_h(F^{n^{-1/2}Z_n})& =n^{-(1+\frac{h}{2})}\sum_{\pi:\pi \text{  circuit of length $h$}}z_{\pi}\\
                        & =n^{-(1+\frac{h}{2})}\sum_{\pi:\pi \text{  circuit of length $h$}}x_{\pi}y_{\pi}.
\end{align*} 
Therefore
\begin{align*}
\E(\beta_h(F^{n^{-1/2}Z_n}))& = n^{-(1+\frac{h}{2})}\sum_{\pi:\, \pi \text{ circuit of length $h$}}\E(x_{\pi}y_{\pi}) \\
& = n^{-(1+\frac{h}{2})}\sum_{\pi:\, \pi \text{ circuit of length $h$}}\E x_{\pi}\E y_{\pi}.
\end{align*}

The two (possibly different) link functions $L_X$ and $L_Y$ induce two partitions (via words) $\mathcal{P}_X$ and $\mathcal{P}_Y$ on the set of all circuits of length $h$. Consider the resultant $\mathcal{P}_Z$ of these two partitions defined as
\[
\mathcal{P}_Z=\{\Pi_X(w)\cap \Pi_Y(w') \, \mid \, w,w'\in \mathcal{A}_{h} \}.
\]
 For the sake of brevity, let us define 
 \[
 \Pi_Z(w,w'):=\Pi_X(w)\cap \Pi_Y(w').
 \]
  Then we can write 
\begin{equation}\label{eq:emt}
\E(\beta_h(F^{n^{-1/2}Z_n})) = n^{-(1+\frac{h}{2})}\sum_{(w,w')\in \mathcal{A}_{h}^2}\ \sum_{\pi \in \Pi_Z(w,w')}\E x_{\pi}\E y_{\pi}.
\end{equation}
We can now state and prove our first lemma. 
\begin{lemma}\label{lem:three_or_more}
 Suppose the input sequences satisfy Assumption (A1) and the link functions $L_X$ and $L_Y$ satisfy Property B. Then circuits which have at least one edge of order $\geqslant 3$ contribute zero to the possible limit of  $\E(\beta_h(F^{n^{-1/2}Z_n}))$. As a consequence, for every odd $h$, $\E(\beta_h(F^{n^{-1/2}Z_n}))\to 0$ and when $h$ is even, only those words $(w, w^{\prime})$ where both are pair-matched can contribute to the possible limit of moments. 
\end{lemma}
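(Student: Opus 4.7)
The plan is to start from the expansion (\ref{eq:emt}) and carry out a standard word-based moment-method estimate, but now over the double index $(w,w')\in \mathcal{A}_h^2$. My first step is to observe that, by independence of the $X$-entries combined with the mean-zero assumption in (A1), we have $\E x_\pi = 0$ as soon as the $L_X$-word $w$ of $\pi$ has any letter of multiplicity one; symmetrically, $\E y_\pi=0$ unless $w'$ is $L$-matched. Hence the sum in (\ref{eq:emt}) may be restricted to pairs $(w,w')$ in which \emph{both} words are matched (every letter appearing at least twice).

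My second step is to control $|\E x_\pi \E y_\pi|$ via the uniform bound $M$ on the inputs supplied by (A1): since each $x_\pi$ and $y_\pi$ is a product of $h$ such entries, $|\E x_\pi \E y_\pi|\leq M^{2h}$, a constant depending only on $h$. The estimate then reduces to a counting problem:
\[
|\E(\beta_h(F^{n^{-1/2}Z_n}))|\,\leq\, M^{2h}\,n^{-(1+h/2)}\!\!\sum_{(w,w')}\,|\Pi_Z(w,w')|,
\]
with the outer sum over matched $(w,w')$. My third step is the count itself, using Property B. Since $\Pi_Z(w,w')\subseteq \Pi^*_X(w)$ and $\Pi_Z(w,w')\subseteq \Pi^*_Y(w')$, Property B yields
\[
|\Pi_Z(w,w')|\,=\,O\bigl(n^{\min(|w|,|w'|)+1}\bigr).
\]

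The key combinatorial observation is: if $w$ is matched and has an $L_X$-edge of order $\geqslant 3$, then the multiplicities of its letters are $\geqslant 2$ with at least one $\geqslant 3$ and sum to $h$, so $|w|\leq (h-1)/2$, i.e., $|w|+1\leq h/2$; the same holds for $w'$. Consequently, whenever either $w$ or $w'$ carries an edge of order $\geqslant 3$, that $(w,w')$ contributes $O(n^{-1})$ to the bound above, and since $|\mathcal{A}_h|$ is finite (and independent of $n$), the aggregate contribution of such pairs is $o(1)$. The two stated consequences then follow immediately: for odd $h$ no matched word of length $h$ can be pair-matched (the multiplicities sum to an odd number), so every surviving $(w,w')$ has an edge of order $\geqslant 3$ and $\E(\beta_h(F^{n^{-1/2}Z_n}))\to 0$; for even $h$, the only words that can escape the $o(1)$ bound are those in which both $w$ and $w'$ are pair-matched.

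The argument is a fairly direct adaptation of the single-matrix word-and-trace bound, so I do not expect a conceptual obstacle. The only mildly delicate point is making sure that the counting inclusion is the right one — one must bound $|\Pi_Z(w,w')|$ using the \emph{individual} Property-B constants $\Delta_{L_X},\Delta_{L_Y}$ (via the inclusions into $\Pi^*_X(w)$ or $\Pi^*_Y(w')$) rather than trying to invoke Property B for the ``joint link'' $(L_X,L_Y)$, which need not be uniformly bounded in the same way. Once one keeps this separation clear, everything else is routine.
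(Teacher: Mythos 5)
Your proposal is correct and takes essentially the same route as the paper: the paper simply cites Lemma 1(a) of \citet{bose2008another} for the bound $\#C^{L}_{h,3+}=O(n^{\lfloor(h+1)/2\rfloor})$, which is exactly the count you re-derive from $\#\Pi^*(w)=O(n^{|w|+1})$ together with $|w|\leq (h-1)/2$ when some letter has multiplicity $\geq 3$. One minor arithmetic quibble: for odd $h$ the step ``$|w|\leq (h-1)/2$, i.e., $|w|+1\leq h/2$'' should read $|w|+1\leq (h+1)/2$, so the per-pair contribution is $O(n^{-1/2})$ rather than $O(n^{-1})$ --- still $o(1)$, so the conclusion is unaffected.
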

\begin{proof}
First note that if a circuit $\pi$ is not $L_X$-matched or $L_Y$-matched then $\E x_{\pi}\E y_{\pi}=0$ and consequently such a $\pi$ does not have any contribution to the moment. We thus need to consider only matched circuits (or words) henceforth. Denote by $C^{L}_{h,3+}$ the set of all $L$-matched circuits of length $h$  with at least one edge of order $\geqslant 3$. If $L$ satisfies Property B, then Lemma 1(a) of \citet{bose2008another} ensures that
\begin{equation}\label{eq:three_or_more}
\#C^{L}_{h,3+}= O(n^{\lfloor(h+1)/2\rfloor}),
\end{equation}
where $\lfloor x\rfloor$ denotes the largest integer contained in $x$. Using this in our context we have
\begin{align*}
\#(C^{L_X}_{h,3+}\cup C^{L_Y}_{h,3+}) & \leq  \#C^{L_X}_{h,3+} + \#C^{L_Y}_{h,3+} \\
									& = O(n^{\lfloor(h+1)/2\rfloor}) +O( n^{\lfloor(h+1)/2\rfloor}) \\
									& = O( n^{\lfloor(h+1)/2\rfloor}).
\end{align*}

This implies that the circuits which have at least one $L_X$-edge or $L_Y$-edge of order $\geqslant 3$ do not contribute in the limit. It follows immediately that for odd $h$ the limit of the expected $h$-th moment is zero and for even $h$ only the circuits in $\Pi_Z(w,w')$ where both $w$ and $w'$ are pair-matched can have a potential contribution. 
This proves the lemma.
\end{proof}
\begin{lemma}\label{lem:inprob}
Suppose $L_X$ and $L_Y$ satisfy Property B and the input sequences satisfy Assumption (A1). Then  $\{\beta_h(n^{-1/2}Z_n)\}$ satisfies Condition $(ii)$ of Lemma~\ref{lem:main} for any $h$. 
\end{lemma}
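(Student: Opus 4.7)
The plan is to expand the variance via the moment–trace formula into a double sum over pairs of circuits and then to reduce, by independence and a standard counting argument, to pairs that are jointly matched and across-matched with respect to at least one of the two link functions. Concretely, write
\[
\Var(\beta_h(n^{-1/2}Z_n)) = n^{-(h+2)}\sum_{\pi_1,\pi_2}\bigl[\E(z_{\pi_1}z_{\pi_2})-\E z_{\pi_1}\E z_{\pi_2}\bigr],
\]
and use the independence of the $X$- and $Y$-inputs to factor each expectation as $\E(x_{\pi_1}x_{\pi_2})\E(y_{\pi_1}y_{\pi_2})$. If the pair $(\pi_1,\pi_2)$ is neither $L_X$-across-matched nor $L_Y$-across-matched, then within each link function the two circuits use disjoint collections of input variables; independence then gives $\E(x_{\pi_1}x_{\pi_2})=\E x_{\pi_1}\E x_{\pi_2}$ and likewise for $y$, so that the bracket vanishes. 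Thus only pairs that are across-matched in $L_X$ or in $L_Y$ contribute.

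For any such contributing pair, the mean-zero hypothesis further forces each $L_X$-value and each $L_Y$-value appearing across the pair to occur at least twice in total, i.e.\ the pair must be jointly matched with respect to both link functions. Assumption (A1) then bounds each surviving product of expectations by a constant $C_h$ depending only on $h$.

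Next I would invoke the standard combinatorial fact (see Lemma~2 of \citet{bose2008another}) that, under Property~B, the number of pairs of circuits of length $h$ that are simultaneously jointly matched and across-matched with respect to a single link function satisfying Property~B is $O(n^{h+1})$ — the across-match costs one generating vertex relative to the naive bound $O(n^{h+2})$. Splitting the contributing pairs into those across-matched in $L_X$ and those across-matched in $L_Y$, and applying this bound in the appropriate link function (using Property~B of that link function), the total number of contributing pairs is $O(n^{h+1})$. Combining,
\[
\Var(\beta_h(n^{-1/2}Z_n)) \le n^{-(h+2)}\cdot O(n^{h+1})\cdot C_h = O(n^{-1})\to 0,
\]
which verifies condition~$(ii)$.

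The main obstacle is the case split above: one has to be careful that the across-match may be provided by $L_X$ alone while $L_Y$ contributes only joint-matching (or vice versa), and that in either case one truly has a valid Property~B circuit pair to which the single-sequence bound of \citet{bose2008another} applies. Once this bookkeeping is set up, the rest is a direct transcription of the single-matrix variance estimate, with the two link functions entering only through the factor $\E x_{\pi_1}x_{\pi_2}\E y_{\pi_1}y_{\pi_2}$ and through Property~B being invoked for whichever link function delivers the across-match.
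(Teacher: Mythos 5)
Your proposal is correct and follows essentially the same route as the paper: expand the variance over pairs of circuits, show that a pair contributes only if it is jointly matched in both link functions and across-matched in at least one (hence lies in $K^{L_X}_{h,2}\cup K^{L_Y}_{h,2}$), and invoke the $O(n^{h+1})$ count for such pairs (which the paper proves as Lemma~\ref{lem:count1} in the appendix rather than quoting it directly from \citet{bose2008another}, whose Lemma~2(a) covers $t=4$). The only cosmetic difference is that you factor $\E(z_{\pi_1}z_{\pi_2})=\E(x_{\pi_1}x_{\pi_2})\E(y_{\pi_1}y_{\pi_2})$ directly via independence of the two input sequences, whereas the paper reaches the same vanishing conclusions through the decomposition $z_{\pi}-\E z_{\pi}=(x_{\pi}-\E x_{\pi})y_{\pi}+(y_{\pi}-\E y_{\pi})\E x_{\pi}$.
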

\begin{proof} 
Let $K^L_{h,t}$ be the set of $t$-tuples of circuits $(\pi_1,\cdots,\pi_t)$ of length $h$ such that they are jointly and across $L$-matched. 
Using Lemma~\ref{lem:count1} we have
\begin{align}\label{bound:pairs}
\#(K^{L_X}_{h,2}\cup K^{L_Y}_{h,2}) & \leq  \#K^{L_X}_{h,2} + \#K^{L_Y}_{h,2} \\
\nonumber									& =  O(n^{h+1}) +O(n^{h+1}) \\
\nonumber									& = O(n^{h+1}).
\end{align}
Now write
\begin{align}\label{repr:var}
\Var(\beta_h(n^{-1/2}Z_n))&=\E[\beta_h(n^{-1/2}Z_n)-\E(\beta_h(n^{-1/2}Z_n))]^2\\
\nonumber &= \E[n^{-1}\tr(n^{-1/2}Z_n)^h-\E(n^{-1}\tr(n^{-1/2}Z_n)^h)]^2\\
\nonumber &= \frac{1}{n^{h+2}}\E[\tr Z_n^h-\E\tr Z_n^h]^2\\
\nonumber &=\frac{1}{n^{h+2}}\sum_{(\pi_1,\pi_2)}\E(z_{\pi_1}-\E z_{\pi_1})(z_{\pi_2}-\E z_{\pi_2}),
\end{align}
and decompose
\begin{align}\label{decomp}
\nonumber z_{\pi_j}-\E z_{\pi_j} & = x_{\pi_j}y_{\pi_j}-\E x_{\pi_j}\E y_{\pi_j} \\
							   & = (x_{\pi_j}-\E x_{\pi_j})y_{\pi_j}+(y_{\pi_j}-\E y_{\pi_j})\E x_{\pi_j}.
\end{align}							   
Note that by decomposition (\ref{decomp}) we have
\begin{align}
\label{repr:2}&\E(z_{\pi_1}-\E z_{\pi_1})(z_{\pi_2}-\E z_{\pi_2})\\
\nonumber=& \E ((x_{\pi_1}-\E x_{\pi_1})y_{\pi_1}+(y_{\pi_1}-\E y_{\pi_1})\E x_{\pi_1})((x_{\pi_2}-\E x_{\pi_2})y_{\pi_2}+(y_{\pi_2}-\E y_{\pi_2})\E x_{\pi_2})\\
\nonumber=& \E (x_{\pi_1}-\E x_{\pi_1})(x_{\pi_2}-\E x_{\pi_2}) \E y_{\pi_1}y_{\pi_2} + \E (x_{\pi_1}-\E x_{\pi_1})\E x_{\pi_2}\E y_{\pi_1}(y_{\pi_2}-\E y_{\pi_2})\\
\nonumber& + \E x_{\pi_1}\E (x_{\pi_2} -\E x_{\pi_2})\E (y_{\pi_1}-\E y_{\pi_1})y_{\pi_2} + \E x_{\pi_1}\E x_{\pi_2}\E (y_{\pi_1}-\E y_{\pi_1})(y_{\pi_2}-\E y_{\pi_2})\\
\nonumber=& \E (x_{\pi_1}-\E x_{\pi_1})(x_{\pi_2}-\E x_{\pi_2}) \E y_{\pi_1}y_{\pi_2} + \E x_{\pi_1}\E x_{\pi_2}\E (y_{\pi_1}-\E y_{\pi_1})(y_{\pi_2}-\E y_{\pi_2}).
\end{align}
If $(\pi_1 , \pi_2)$ are not jointly $L_X$-matched, then one of the circuits, say $\pi_1$ , has an
$L_X$-value which does not occur anywhere else. Therefore $\E x_{\pi_1}=0$. So, from (\ref{repr:2}) it follows that
\[
\E(z_{\pi_1}-\E z_{\pi_1})(z_{\pi_2}-\E z_{\pi_2}) = \E x_{\pi_1}(x_{\pi_2}-\E x_{\pi_2}) \E y_{\pi_1}y_{\pi_2}.
\]
But since the input $X$-variable corresponding to the single $L_X$-value appears in the product $x_{\pi_1}(x_{\pi_2}-\E x_{\pi_2})$ inside $x_{\pi_1}$ and is independent of every other term in the product, we conclude that
\[
\E x_{\pi_1}(x_{\pi_2}-\E x_{\pi_2})=0,
\]
and as a result
\[
\E(z_{\pi_1}-\E z_{\pi_1})(z_{\pi_2}-\E z_{\pi_2})=0.
\]
 Therefore, it is enough to consider those  $(\pi_1 , \pi_2)$ which are jointly $L_X$-matched and jointly $L_Y$-matched.

Now suppose that $(\pi_1 , \pi_2)$ are jointly $L_X$ as well as $L_Y$-matched but neither across $L_X$-matched nor across $L_Y$-matched. Then there is a circuit, say $\pi_k$, which is only \emph{self $L_X$-matched}, i.e., none of its $L_X$-values is shared with those of the other circuit. Similarly, there is a circuit $\pi_l$ that is only self $L_Y$-matched. Now note that $(x_{\pi_k}-\E x_{\pi_k})$ is independent of $(x_{\pi_j}-\E x_{\pi_j})$ for $j\neq k$ and similarly $(y_{\pi_l}-\E y_{\pi_l})$ is independent of $(y_{\pi_j}-\E y_{\pi_j})$ for $j\neq l$. Using this in (\ref{repr:2}) we can write
\begin{align*}
\E(z_{\pi_1}-\E z_{\pi_1})(z_{\pi_2}-\E z_{\pi_2})= \E (x_{\pi_1}-\E x_{\pi_1})&\E(x_{\pi_2}-\E x_{\pi_2})\E y_{\pi_1}y_{\pi_2} \\&+ \E x_{\pi_1}\E x_{\pi_2}\E (y_{\pi_1}-\E y_{\pi_1})\E(y_{\pi_2}-\E y_{\pi_2})\\
&=0.
\end{align*}
It thus follows that if $(\pi_1, \pi_2)\notin K^{L_X}_{h,2}\cup K^{L_Y}_{h,2}$, then  
\[
\E(z_{\pi_1}-\E z_{\pi_1})(z_{\pi_2}-\E z_{\pi_2})=0.
\]
Now, because of Assumption (A1), $\E(z_{\pi_1}-\E z_{\pi_1})(z_{\pi_2}-\E z_{\pi_2})$ is bounded uniformly across all possible pairs of circuits and thus, from (\ref{repr:var}) and the bound (\ref{bound:pairs}), we conclude that
\[
\Var(\beta_h(n^{-1/2}Z_n))=O\left(\frac{n^{h+1}}{n^{h+2}}\right)=O(n^{-1}).
\]
This completes the proof.
\end{proof}
\begin{remark}\label{rem:asinappendix}
The verification of Condition $(ii')$ of Lemma~\ref{lem:main} requires more subtle combinatorial analysis. We have included it in Appendix~\ref{sec:asconv}.
\end{remark}
We now show how Carleman's condition can be checked easily under Property B provided that the
word limit exists for every pair-matched word $(w, w^\prime)$.  
Let
\[
\Pi^*_L(w)=\{\pi \, \mid \, w[i]=w[j] \Rightarrow L(\pi(i-1),\pi(i))=L(\pi(j-1),\pi(j)) \text{, for all indices $(i,j)$} \}.
\]
Clearly, $\Pi_L(w)\subseteq \Pi^*_L(w)$. Define
\[
\Pi^*_Z(w,w'):=\Pi^*_X(w)\cap \Pi^*_Y(w').
\]
This also satisfies $\Pi_Z(w,w')\subseteq \Pi^*_Z(w,w')$ and the set $\Pi^*_Z(w,w')\setminus \Pi_Z(w,w')$ is contained in $C^{L_X}_{h,3+}\cup C^{L_Y}_{h,3+}$ which means by Lemma~\ref{lem:three_or_more} that
\[
\lim_n n^{-(1+\frac{h}{2})} \#(\Pi^*_Z(w,w')\setminus \Pi_Z(w,w'))=0 \text{, for each $(w,w')\in \mathcal{W}_{2k}^2$.}
\]
Therefore, if $\lim_n n^{-(1+\frac{h}{2})} \#\Pi_Z(w,w')$ exists, we have
\[
\lim_n n^{-(1+\frac{h}{2})} \#\Pi_Z(w,w')=\lim_n n^{-(1+\frac{h}{2})} \#\Pi^*_Z(w,w')= p_Z(w,w'), \ \ \text{say}.\]
\begin{theorem}\label{thm:LSD} 
Suppose that $L_X$ and $L_Y$ satisfy Property B and the input sequences satisfy Assumption (A1). If the limit $p_Z(w,w^\prime)$ exists for every pair of pair-matched words $(w, w^{\prime})$ then Condition $(i)$ of Lemma~\ref{lem:main} holds and the limit moments satisfy Condition $(iii)$ of Lemma~\ref{lem:main} and hence are the moments of the LSD. The limit law in that case is sub-Gaussian.  If the input sequences satisfy Assumptions (A2) or (A3) and the link functions satisfy Conditions \ref{Lcond}, then the 
same LSD continues to hold. 
\end{theorem}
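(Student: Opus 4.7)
The plan is to apply Lemma~\ref{lem:main} to $\{n^{-1/2} Z_n\}$. Condition $(ii)$ is already provided by Lemma~\ref{lem:inprob}, and Condition $(ii')$ is deferred to Appendix~\ref{sec:asconv} as noted in Remark~\ref{rem:asinappendix}. So the verification reduces to Conditions $(i)$ and $(iii)$ under Assumption (A1), followed by universality under Assumptions (A2) or (A3).

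For Condition $(i)$ I would start from formula~(\ref{eq:emt}) and apply Lemma~\ref{lem:three_or_more} to reduce to pairs $(w, w^{\prime}) \in \mathcal{W}_{2k}^2$ of pair-matched words when $h = 2k$ (with odd moments tending to $0$). For any such pair and any $\pi \in \Pi_Z(w, w^{\prime})$, each distinct $L_X$- and $L_Y$-variable appears exactly twice in $x_\pi y_\pi$, so by independence and unit variance of the inputs $\E x_\pi \E y_\pi = 1$. Combining with the $\Pi^*_Z \setminus \Pi_Z$ estimate recorded just above the theorem statement and with the assumed existence of $p_Z(w, w^{\prime})$, I get
\[
\E \beta_{2k}(n^{-1/2} Z_n) \longrightarrow \beta_{2k} := \sum_{(w, w^{\prime}) \in \mathcal{W}_{2k}^2} p_Z(w, w^{\prime}), \qquad \E \beta_{2k+1}(n^{-1/2} Z_n) \to 0,
\]
giving Condition $(i)$.

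For Condition $(iii)$ and the sub-Gaussian claim I would use the following marginalisation identity: since $\{\Pi_Y(w^{\prime}) : w^{\prime} \in \mathcal{A}_{2k}\}$ partitions the set of all length-$2k$ circuits, intersecting with $\Pi_X(w)$ and dividing by $n^{k+1}$ yields $\sum_{w^{\prime}} p_Z(w, w^{\prime}) = p_X(w)$ for each fixed $w$. Property B of $L_X$, together with the fact that a pair-matched $w \in \mathcal{W}_{2k}$ has $|w|+1 = k+1$ generating vertices, gives $p_X(w) \leqslant \Delta_{L_X}^k$. Non-negativity of all $p_Z(w, w^{\prime})$ then yields
\[
\beta_{2k} \leqslant \sum_{w \in \mathcal{W}_{2k}} p_X(w) \leqslant (2k-1)!! \cdot \Delta_{L_X}^k,
\]
and Stirling's formula delivers $\beta_{2k}^{1/(2k)} = O(\sqrt{k})$. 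This immediately gives Carleman's condition $\sum \beta_{2k}^{-1/(2k)} = \infty$ and establishes sub-Gaussianity of the limit law.

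For universality under (A2) or (A3) with Conditions~(\ref{Lcond}), the plan is to invoke the standard truncation argument of \citet{bose2008another}: truncate and renormalise each input variable at a slowly diverging threshold, use the Hoffman--Wielandt inequality to control the perturbation in L\'evy distance, and appeal to Borel--Cantelli to transfer the almost-sure LSD from (A1) to the more general settings. I expect the main obstacle to be the moment computation for Condition $(i)$, where one has to identify the non-trivial contributions by carefully parsing the two-word equivalence classes $\Pi_Z(w,w^{\prime})$; once that is in hand, the sub-Gaussian bound drops out cleanly from the marginalisation identity above, and the universality step is entirely standard.
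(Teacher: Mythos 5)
Your proposal is correct and follows essentially the same route as the paper: reduce to pair-matched word pairs via Lemma~\ref{lem:three_or_more} and the $\Pi^*_Z\setminus\Pi_Z$ estimate, identify the even limit moments as $\sum_{(w,w')\in\mathcal{W}_{2k}^2}p_Z(w,w')$, bound them through the inclusion $\bigcup_{w'\in\mathcal{W}_{2k}}\Pi_Z(w,w')\subseteq\Pi_X(w)$ to obtain $\beta_{2k}\leqslant\frac{(2k)!}{2^k k!}\Delta_{L_X}^{k}$ (the paper sharpens this to $\Delta=\min\{\Delta_{L_X},\Delta_{L_Y}\}$ by symmetry), and handle (A2)/(A3) by the standard truncation argument. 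The only nit is that your marginalisation step, stated as the exact identity $\sum_{w'\in\mathcal{A}_{2k}}p_Z(w,w')=p_X(w)$, presumes limits for non-pair-matched $w'$ that are not guaranteed to exist; since you immediately discard those terms by non-negativity and use only the resulting inequality, exactly as the paper does, this is harmless.
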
 
\begin{proof}
By the developments so far, 
\begin{align*}
\lim_n \E(\beta_{2k}(F^{n^{-1/2}Z_n})) = &\sum_{(w,w')\in \mathcal{W}_{2k}^2} \lim_n n^{-(1+\frac{h}{2})} \#\Pi_Z(w,w')\\
= &\sum_{(w,w')\in \mathcal{W}_{2k}^2} p_Z(w,w').
\end{align*}
Then the moments of the LSD  would be given by
\begin{equation}
\beta_h=\begin{cases}
0, & \text{ if $h$ is odd} \\
\displaystyle\sum_{(w,w')\in \mathcal{W}_{2k}^2} p_Z(w,w'), & \text{ if $h=2k$.}
\end{cases}
\end{equation}
Now note that for any $w\in \mathcal{A}_{2k}$,
\[
\Pi_X(w)=\bigcup_{w'\in \mathcal{A}_{2k}}\Pi_Z(w,w').
\]
Therefore, for $w\in\mathcal{W}_{2k}$ we have
\[
\bigcup_{w'\in \mathcal{W}_{2k}}\Pi_Z(w,w') \subseteq  \Pi_X(w),
\]
which implies that 
\[
\#\left(\bigcup_{w'\in \mathcal{W}_{2k}}\Pi_Z(w,w')\right)\leq \#\Pi_X(w).
\]
This means that for $(w,w')\in \mathcal{W}_{2k}^2$ we have
\[
\sum_{w'\in \mathcal{W}_{2k}} p_Z(w,w') \leq p_X(w),
\]
and therefore
\[
\sum_{(w,w')\in \mathcal{W}_{2k}^2} p_Z(w,w') \leq \sum_{w \in \mathcal{W}_{2k}} p_X(w).
\]
Since $X$ and $Y$ play a symmetric role in $Z$, we have furthermore
\begin{equation}
\sum_{(w,w')\in \mathcal{W}_{2k}^2} p_Z(w,w') \leq \min\{\sum_{w \in \mathcal{W}_{2k}} p_X(w), \sum_{w' \in \mathcal{W}_{2k}} p_Y(w') \}.
\end{equation}
Recall that the matrices are assumed to satisfy Property B. Also note that the number of pair-matched words of length $2k$ equals 
$\frac{(2k)!}{2^k k!}$. Then it is easy to see that 
\begin{equation}
\beta_{2k} \leq \frac{(2k)!}{2^k k!}\Delta^{k},
\end{equation}
where $\Delta:=\min\{\Delta_{L_X}, \Delta_{L_Y}\}$. This guarantees that $\{\beta_h\}$ satisfies Carleman's condition and the limit law is sub-Gaussian.

Suppose now that the input sequences satisfy Assumption (A2) and the link  functions satisfy Conditions \ref{Lcond}. Then, by appropriate truncation of the input variables and strong law of large numbers, one can reduce that case to the case where Assumption (A1) holds. We omit the tedious details which are similar to the case for a single matrix (see, for example, \citet{bose2008another}).

If the input sequences satisfy Assumption (A3), then all moments are bounded and all the moment calculations and bounds used so far go through. Again, we omit the details. Finally, note that by Remark~\ref{rem:asinappendix} the LSD exists in the almost sure sense. This completes the proof. 
\end{proof}

\section{Some general results}\label{sec:genresults}
Note that by Theorem~\ref{thm:LSD}, the LSD will exist if the limit $p(w, w^\prime)$ exists for each pair-matched word pair $(w,w')$. In this section we shall consider several types of $\{X_n\}$, $\{Y_n\}$ and establish general results on the LSD of $n^{-1/2}Z_n$. We assume that all input sequences satisfy Assumption (A1), (A2) or (A3). But as discussed, we can work under Assumption (A1). We first establish an invariance theorem.
\begin{theorem}\label{thm:gen}
Suppose that $L_X$ satisfies Property B and Conditions (\ref{Lcond}). Suppose  $p_X(w)$ exists for each word so that $\mathcal{L}_X$ exists. Also suppose that there is a transformation $\rho$ such that $L_Y=\rho\circ L_X$ and $L_Y$ also satisfies the above conditions. Then the LSD of $\{n^{-1/2}Z_n\}$ exists almost surely and equals $\mathcal{L}_X$.
\end{theorem}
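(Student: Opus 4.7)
The plan is to apply Theorem~\ref{thm:LSD} by checking the hypothesis that $p_Z(w,w^\prime)$ exists for every pair-matched pair, and then identifying the resulting moments with those of $\mathcal{L}_X$. The driving observation is that the assumption $L_Y = \rho\circ L_X$ is extremely rigid: whenever two edges of a circuit $\pi$ have equal $L_X$-values, applying $\rho$ forces their $L_Y$-values to be equal too. Consequently, for any circuit $\pi$, the partition of $\{1,\dots,h\}$ induced by $L_Y$-matches is always a coarsening of the one induced by $L_X$-matches, i.e., the $L_Y$-word of $\pi$ is obtained from its $L_X$-word by merging some letters.

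Using this, I would argue that only diagonal pairs contribute. Fix $(w,w^\prime) \in \mathcal{W}_{2k}^2$ and suppose $\Pi_Z(w,w^\prime)$ is nonempty. Any $\pi$ in it has $w^\prime$ as a coarsening of $w$. Since $w$ is pair-matched (each letter appears exactly twice), any strict merging of its blocks produces a word in which some letter appears at least four times, violating $w^\prime \in \mathcal{W}_{2k}$. Hence $w^\prime = w$, and all off-diagonal pair-matched pairs give $\Pi_Z(w,w^\prime) = \emptyset$, hence $p_Z(w,w^\prime) = 0$.

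Next I would show $p_Z(w,w) = p_X(w)$ for every $w \in \mathcal{W}_{2k}$. Trivially $\Pi_Z(w,w) \subseteq \Pi_X(w)$. Conversely, any $\pi \in \Pi_X(w) \setminus \Pi_Z(w,w)$ has an $L_Y$-word that is a strict coarsening of $w$, so $\pi$ carries an $L_Y$-edge of order $\geqslant 4 \geqslant 3$. The bound (\ref{eq:three_or_more}) from Lemma~\ref{lem:three_or_more} then gives
\[
\#(\Pi_X(w) \setminus \Pi_Z(w,w)) \leqslant \#C^{L_Y}_{h,3+} = O(n^{\lfloor (h+1)/2 \rfloor}) = O(n^k),
\]
for $h = 2k$, which is $o(n^{k+1})$. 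Hence
\[
p_Z(w,w) = \lim_n n^{-(k+1)} \#\Pi_Z(w,w) = \lim_n n^{-(k+1)} \#\Pi_X(w) = p_X(w),
\]
the second equality using the assumed existence of $p_X(w)$. Summing,
\[
\beta_{2k} = \sum_{(w,w^\prime) \in \mathcal{W}_{2k}^2} p_Z(w,w^\prime) = \sum_{w \in \mathcal{W}_{2k}} p_X(w),
\]
which is exactly the $2k$-th moment of $\mathcal{L}_X$. Theorem~\ref{thm:LSD} then delivers convergence in probability (with the almost sure upgrade coming from the appendix flagged in Remark~\ref{rem:asinappendix}), and the limit is $\mathcal{L}_X$.

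The main obstacle, such as it is, lies entirely in the combinatorial step that forces $w^\prime = w$: the argument works precisely because we restrict to pair-matched $w$ and because a strict coarsening of a pair partition necessarily produces a block of size $\geqslant 4$, which is the same threshold that lets us discard the leftover circuits via the order-$\geqslant 3$ count. Everything else is a bookkeeping application of Theorem~\ref{thm:LSD}, so no new estimates beyond those already assembled in Section~\ref{sec:shproduct} are needed.
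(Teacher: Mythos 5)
Your proof is correct, and it reaches the same endpoint as the paper's (namely $\sum_{w'} p_Z(w,w') = p_X(w)$ for each pair-matched $w$, hence $\beta_{2k}^Z=\beta_{2k}^X$) by a genuinely different mechanism for the cross terms. The paper proves the one-sided inclusion $\Pi^*_X(w)\subseteq\Pi^*_Y(w)$ (an $L_X$-match forces an $L_Y$-match through $\rho$), deduces $p_Z(w,w)=p_X(w)$, and then disposes of the off-diagonal terms indirectly via the sandwich $p_X(w)=p_Z(w,w)\le\sum_{w'}p_Z(w,w')\le p_X(w)$, the upper bound being the disjoint-union inequality already set up in the proof of Theorem~\ref{thm:LSD}. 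You instead work with the exact classes $\Pi$ and observe that the $L_Y$-word of any circuit is a coarsening of its $L_X$-word, so that for distinct pair-matched $w,w'$ the class $\Pi_Z(w,w')$ is literally empty (a strict coarsening of a pair partition creates a block of size at least $4$), while $\Pi_X(w)\setminus\Pi_Z(w,w)$ sits inside $C^{L_Y}_{2k,3+}$ and is negligible by (\ref{eq:three_or_more}). Your route is more explicit about why the cross terms vanish --- they are zero for every $n$, not merely in the limit --- and yields the sharper structural fact that the only word pairs with nonempty $\Pi_Z$ are the diagonal ones; the paper's route is shorter and reuses machinery without any further counting. Both arguments correctly supply the existence of every $p_Z(w,w')$ needed to invoke Theorem~\ref{thm:LSD}, and both then inherit the almost sure statement from Lemma~\ref{lem:as} via Remark~\ref{rem:asinappendix}.
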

\begin{proof}
It is enough to show that $\{n^{-1/2}Z_n\}$ has the same limiting moment sequence as $\{n^{-1/2}X_n\}$ and by the theory developed earlier it suffices to look at the even moments only.
Suppose that $w\in \mathcal{W}_{2k}$ and $\pi\in\Pi^*_X(w)$. Then
\begin{align*}
w[i]=w[j] & \Rightarrow L_X(\pi(i-1),\pi(i))=L_X(\pi(j-1),\pi(j)) \\
		  & \Rightarrow \rho \circ L_X(\pi(i-1),\pi(i))=\rho \circ L_X(\pi(j-1),\pi(j)),\\
		  & \text{i.e., } L_Y(\pi(i-1),\pi(i))=L_Y(\pi(j-1),\pi(j)).
\end{align*}
Therefore $\pi \in \Pi^*_Y(w)$ which means that $\Pi^*_X(w)\subseteq \Pi^*_Y(w)$. Therefore
\[
\Pi^*_X(w)\cap \Pi^*_Y(w)=\Pi^*_X(w).
\]
This implies that $p_Z(w,w)=p_X(w)$. But then
\[
p_X(w)=p_Z(w,w)\leq \sum_{w'\in \mathcal{W}_{2k}} p_Z(w,w')\leq p_X(w),
\]
i.e., for each $w \in \mathcal{W}_{2k}$,
\[
\sum_{w'\in \mathcal{W}_{2k}} p_Z(w,w')=p_X(w).
\]
Therefore, one has
\begin{align*}
\beta_{2k}^Z=\lim_n \E(\beta_{2k}(F^{n^{-1/2}Z_n})) & = \sum_{(w,w')\in \mathcal{W}_{2k}\times \mathcal{W}_{2k}} p_Z(w,w') \\
&= \sum_{w\in \mathcal{W}_{2k}}\sum_{w' \in \mathcal{W}_{2k}} p_Z(w,w') \\
&= \sum_{w\in \mathcal{W}_{2k}} p_X(w) \\
&= \beta_{2k}^X.
\end{align*}
This completes the proof.
\end{proof}
\begin{remark}
It is clear from the proof of Theorem~\ref{thm:gen} that we may let $\rho$ depend on $n$. Indeed, all our arguments are for a fixed $n$. For the sake of brevity, we shall continue using $\rho$ instead of $\rho_n$.
\end{remark}
\begin{exm}\label{exm:wigner}
Suppose $X_n=W_n$ and $L_Y$ satisfies Property B and Conditions 
(\ref{Lcond}). Then the LSD of $\{n^{-1/2}W_n \odot Y_n\}$ is the semi-circular law $\mathcal{L}_W$ almost surely. In particular, $Y_n$ could be any one among $T_n$, $H_n$, $RC_n$, $SC_n$ or $DH_n$.

One interesting case is when $X_n$ is a $\pm1$ Bernoulli (with $p=0.5$) Wigner matrix. In that case the Schur-Hadamard product may be interpreted as a randomly censored patterned matrix. Thus, for example, a randomly ($\pm1$) censored Toeplitz matrix will have the semi-circular law as its LSD. This is the result of \citet{beckwith2011distribution} in the $p=0.5$ case.
\end{exm}

\begin{exm}\label{exm:toeplitz}
Suppose that $X_n=T_n$. Also suppose that
\[
L_Y(i,j)=\rho(|i-j|) \text{, for all $(i,j)$}
\]
for some function $\rho$ and 
satisfies Property B and Conditions 
(\ref{Lcond}). 
Then the LSD of $\{n^{-1/2}T_n \odot Y_n\}$ is $\mathcal{L}_{T}$.

In particular, if $Y_n=SC_n$, then the LSD of $\{n^{-1/2}T_n\odot SC_n\}$ is $\mathcal{L}_{T}$.
\end{exm}
\begin{exm}\label{exm:hankel}
Suppose that $X_n=H_n$. Also suppose that 
\[
L_Y(i,j)=\rho(i+j) \text{, for all $(i,j)$}
\]
for some function $\rho$ and satisfies Property B and Conditions 
(\ref{Lcond}). 
Then the LSD of $\{n^{-1/2}H_n\odot Y_n\}$ is $\mathcal{L}_{H}$.

The link functions of the Reverse Circulant and the Doubly Symmetric Hankel matrices have the form $\rho(i+j)$. Therefore, if we take $Y_n=RC_n$ or $DH_n$, we can conclude that the LSD of $\{n^{-1/2}H_n\odot Y_n\}$
is $\mathcal{L}_{H}$.

Similarly, since $L_{DH}$ is of the form $\rho(L_{RC})$, we conclude that the LSD of $\{n^{-1/2}RC_n\odot DH_n\}$ is $\mathcal{L}_{RC}$.
\end{exm}

\begin{remark}
Note that Theorem~\ref{thm:gen} gives us all the rows of Table~\ref{table2} except the second one.
\end{remark}

The next natural question is what happens when we transform both the link functions $L_X$ and $L_Y$. We first consider the case where we have a single sequence $X_n$ and its link function is transformed.

\begin{prop}\label{prop:injective}
Suppose the link function $L_X$ satisfies Property B and Conditions (\ref{Lcond}). Suppose also that $p_X(w)$ exists for each word so that $\mathcal{L}_X$ exists. Suppose $L_Y:=\rho \circ L_X$ where $\rho$ is an injective transformation. 
Then the LSD of $\{n^{-1/2}Y_n\}$ exists and equals $\mathcal{L}_X$.
\end{prop}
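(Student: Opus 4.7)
The plan is to exploit the fact that when $\rho$ is injective, the equivalence relations among $L_X$-values and among $L_Y$-values (on pairs of indices) coincide, so that the entire combinatorial bookkeeping underlying the moment method is preserved verbatim. Consequently no new structure has to be analysed; the conclusion should drop out by invoking the single-matrix theory around (\ref{wordlimit}) for $Y_n$.

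First, I would verify that $L_Y$ inherits Property B and Conditions (\ref{Lcond}) from $L_X$. Since $\rho$ is injective, for any $t$ the preimage $\rho^{-1}(t)$ is either empty or a singleton, which immediately yields $\Delta_{L_Y}\leqslant \Delta_{L_X}<\infty$. For the same reason $k_n^Y=k_n^X\to\infty$ and $\alpha_n^Y=\alpha_n^X$, so that $k_n^Y\alpha_n^Y=O(n^2)$.

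Next, for each word $w\in\mathcal{A}_{2k}$, I would show the set equality $\Pi_X(w)=\Pi_Y(w)$. Indeed, for any circuit $\pi$, injectivity of $\rho$ gives
\[
L_X(\pi(i-1),\pi(i)) = L_X(\pi(j-1),\pi(j)) \;\Leftrightarrow\; L_Y(\pi(i-1),\pi(i)) = L_Y(\pi(j-1),\pi(j)),
\]
so the defining biconditional for $\pi\in\Pi_X(w)$ holds iff the one for $\pi\in\Pi_Y(w)$ does. Consequently $p_Y(w)=p_X(w)$ for every pair-matched word $w$, and by (\ref{wordlimit}) the LSD of $\{n^{-1/2}Y_n\}$ exists with $2k$-th moment $\sum_{w\in \mathcal{W}_{2k}}p_X(w)=\beta_{2k}^X$, hence coincides with $\mathcal{L}_X$.

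There is no real obstacle here: the whole content is the observation that an injective $\rho$ preserves equivalence classes of link values, and hence preserves the counts $\#\Pi(w)$ exactly (not just asymptotically). One may view this proposition as the single-matrix cousin of Theorem~\ref{thm:gen}: dropping the Schur-Hadamard structure lets us upgrade the equality of word contributions $p_Z(w,w)=p_X(w)$ to the sharper set-equality $\Pi_X(w)=\Pi_Y(w)$, at the cost of requiring $\rho$ itself to be injective.
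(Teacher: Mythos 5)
Your proposal is correct and follows essentially the same route as the paper: verify that $L_Y$ inherits Property B and Conditions (\ref{Lcond}) via injectivity of $\rho$, and then observe that injectivity turns the implication $L_X\text{-match}\Rightarrow L_Y\text{-match}$ into a biconditional, so the circuit classes for $X$ and $Y$ coincide exactly and $p_Y(w)=p_X(w)$. The only cosmetic difference is that you work with $\Pi(w)$ while the paper establishes $\Pi^*_X(w)=\Pi^*_Y(w)$; by (\ref{wordlimit}) these give the same limits, so nothing is lost.
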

\begin{proof}
We first observe that $L_Y$ also satisfies Property B.
To prove this, for $1\leqslant k \leqslant n$ and $t \in \text{range}(L_X)$, define $D^X_{k,t}=\{l\mid 1\leqslant l \leqslant n, \, L_X(k,l)=t\}$ and similarly define $D^Y_{k,t}$. Note that 
\[
L_X(k,l)=t \Leftrightarrow L_Y(k,l)=\rho(t).
\]
Thus $D^X_{k,t}=D^Y_{k,\rho(t)}$. Since $\rho$ is injective and by definition range($L_Y$)$=\rho(\text{range}(L_X))$, we have
\begin{align*}
\Delta_{L_X} & =
\sup_n\sup_{t\in \text{range}(L_X)}\sup_{1\leqslant k\leqslant n} \# D^X_{k,t}\\ 
&=\sup_n\sup_{\rho(t)\in \text{range}(L_Y)}\sup_{1\leqslant k\leqslant n} \# D^Y_{k,\rho(t)}.\\
			& =\Delta_{L_Y}.
\end{align*}
It now suffices to show that $\Pi^*_X(w)=\Pi^*_Y(w)$ for each $w \in \mathcal{W}_{2k}$. From the proof of Theorem~\ref{thm:gen}, it follows that $\Pi^*_X(w)\subseteq \Pi^*_Y(w)$. To show the other way, suppose that $\pi \in \Pi^*_Y(w)$. Then
\begin{align*}
& w[i]=w[j]\\
\Rightarrow & L_Y(\pi(i-1),\pi(i))=L_Y(\pi(j-1),\pi(j))\\
\Rightarrow & \rho \circ L_X(\pi(i-1),\pi(i))=\rho \circ L_X(\pi(j-1),\pi(j))\\
\Rightarrow & L_X(\pi(i-1),\pi(i))=L_X(\pi(j-1),\pi(j)) \,  \text{ (by injectivity of $\rho$)}.
\end{align*}
Thus $\pi \in \Pi^*_X(w)$. Finally, we note that because of the injectivity of $\rho$, we have $k_n^X=k_n^Y$ and $\alpha_n^X=\alpha_n^Y$, so that $L_Y$ satisfies Conditions (\ref{Lcond}). The proof is now complete.
\end{proof}
\begin{exm}\label{exm:injec1}
Take $\rho(i,j)=a^ib^j$, where $a$ and $b$ are coprime positive integers, which is injective and compose it with the Wigner link function to obtain the link function $L(i,j)=a^{i\wedge j}b^{i\vee j}$. Then the LSD of the corresponding patterned random matrix is the  semi-circular law. Similarly, the patterned random matrix with the link function $L(i,j)=(i-j)^2$ has the same LSD as the Toeplitz matrix.
\end{exm}

The following proposition shows that if both $L_X$ and $L_Y$ are transformed via injective maps, then the LSD of their Schur-Hadamard product is preserved.
\begin{prop}\label{prop:both_injective}
Suppose $X_n$ and $Y_n$ are independent patterned matrices where the link functions $L_X$ and $L_Y$ satisfy Property B and Conditions \ref{Lcond}. Suppose $p_Z(w,w')$ exists for each pair-matched word-pair $(w,w')$ so that the LSD of $\{n^{-1/2}X_n\odot Y_n\}$ exists. Suppose $\rho_1$ and $\rho_2$ are injective transformations and $L_U=\rho_1 \circ L_X$ and $L_V=\rho_2 \circ L_Y$. If $U_n$ and $V_n$ are independent patterned matrices with link functions $L_U$ and $L_V$ respectively, then the LSD of $\{n^{-1/2}U_n\odot V_n\}$ exists and is same as that of $\{n^{-1/2}X_n\odot Y_n\}$.
\end{prop}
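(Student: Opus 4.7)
The plan is to reduce Proposition~\ref{prop:both_injective} to Proposition~\ref{prop:injective} applied separately to each component of the Schur-Hadamard product, exploiting the fact that all the relevant quantities in Theorem~\ref{thm:LSD} are built from the sets $\Pi^*_X(w)\cap \Pi^*_Y(w')$, and injectivity preserves these sets on the nose.

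First I would check that the hypotheses of Theorem~\ref{thm:LSD} are in force for $\{n^{-1/2}U_n\odot V_n\}$. By the argument already given in the proof of Proposition~\ref{prop:injective}, $\Delta_{L_U}=\Delta_{L_X}<\infty$ and $\Delta_{L_V}=\Delta_{L_Y}<\infty$, so both $L_U$ and $L_V$ satisfy Property~B. Similarly, since $\rho_1$ and $\rho_2$ are injective, $k_n^U=k_n^X$, $\alpha_n^U=\alpha_n^X$, $k_n^V=k_n^Y$, $\alpha_n^V=\alpha_n^Y$, so that Conditions~(\ref{Lcond}) transfer from $L_X,L_Y$ to $L_U,L_V$.

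The core step is the set-equality $\Pi^*_U(w)=\Pi^*_X(w)$ (and the analogous statement for $V,Y$) for every $w\in\mathcal{W}_{2k}$. The inclusion $\Pi^*_X(w)\subseteq \Pi^*_U(w)$ follows as in Theorem~\ref{thm:gen}: if $w[i]=w[j]$ forces $L_X$-values to agree, then composing with $\rho_1$ forces $L_U$-values to agree. The reverse inclusion uses injectivity of $\rho_1$: if $L_U(\pi(i-1),\pi(i))=L_U(\pi(j-1),\pi(j))$, i.e.\ $\rho_1\circ L_X(\pi(i-1),\pi(i))=\rho_1\circ L_X(\pi(j-1),\pi(j))$, then by injectivity the underlying $L_X$-values coincide. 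The same argument, with $\rho_2$ in place of $\rho_1$, gives $\Pi^*_V(w')=\Pi^*_Y(w')$.

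Intersecting, I obtain, for each pair-matched word-pair $(w,w')\in \mathcal{W}_{2k}^2$,
\[
\Pi^*_{U\odot V}(w,w')=\Pi^*_U(w)\cap \Pi^*_V(w')=\Pi^*_X(w)\cap \Pi^*_Y(w')=\Pi^*_{X\odot Y}(w,w').
\]
Dividing by $n^{1+k}$ and letting $n\to\infty$, the limits $p_{U\odot V}(w,w')$ exist and equal the corresponding $p_{X\odot Y}(w,w')$. Invoking Theorem~\ref{thm:LSD} (whose hypotheses we have verified for $U\odot V$), both Schur-Hadamard products yield the same sequence of limiting moments, which by Carleman's condition determines a common sub-Gaussian LSD. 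I do not anticipate a serious obstacle; the only mildly delicate point is the bookkeeping check that $L_U,L_V$ satisfy Property~B and Conditions~(\ref{Lcond}), which is essentially a repetition of the one carried out in Proposition~\ref{prop:injective}.
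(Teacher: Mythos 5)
Your proposal is correct and follows essentially the same route as the paper: both establish $\Pi^*_U(w)=\Pi^*_X(w)$ and $\Pi^*_V(w')=\Pi^*_Y(w')$ via the injectivity argument from Proposition~\ref{prop:injective}, transfer Property B and Conditions (\ref{Lcond}), intersect the sets, and conclude via Theorem~\ref{thm:LSD}. The paper's proof is just a terser version of exactly this argument.
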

\begin{proof}
From the proof of Proposition~\ref{prop:injective} we see that both $L_U$ and $L_V$ satisfy Property B and Conditions \ref{Lcond} and we have $\Pi^*_{X}(w)=\Pi^*_U(w)$ and $\Pi^*_{Y}(w)=\Pi^*_V(w)$ for each pair matched word $w$. Then, for each word-pair $(w,w')$, one has $\Pi^*_{X}(w)\cap \Pi^*_{Y}(w')=\Pi^*_U(w)\cap \Pi^*_V(w')$. This completes the proof.
\end{proof}
\begin{exm}
In view of Examples \ref{exm:wigner} and \ref{exm:injec1} we conclude from Proposition \ref{prop:both_injective} that the LSD of the Schur-Hadamard product sequence $\{n^{-1/2}U_n\odot V_n\}$ where $L_U(i,j)=a^{i\wedge j}b^{i\vee j}$ and $L_V(i,j)=(i-j)^2$ is the semi-circular law.
\end{exm}
\begin{remark}
If we drop the assumption of injectivity, all we can say is that $\Pi^*_{X}(w)\cap \Pi^*_{Y}(w')\subseteq \Pi^*_U(w)\cap \Pi^*_V(w')$. So, if the LSD of $\{n^{-1/2}U_n\odot V_n\}$ exists, its moments will dominate the moments of the LSD of $\{n^{-1/2}X_n\odot Y_n\}$.
\end{remark}
\section{Toeplitz and Hankel}\label{sec:toephank}
Theorem~\ref{thm:gen} does not cover the situation where $L_Y$ is not a function of $L_X$, e.g., the case where $X_n$ is Toeplitz and $Y_n$ is Hankel (see the second row of Table~\ref{table2}). In order to proceed further we need the concept of \emph{Catalan} words from \citet{bose2008another}.

A Catalan word of length $2$ is just a double letter $aa$. In general, a Catalan word of length $2k$, $k>1$, is a word $w\in \mathcal{W}_{2k}$ containing a double letter such that if one deletes the double letter the reduced word becomes a Catalan word of length $2k-2$.
For example, $abba$, $aabbcc$, $abccbdda$ are Catalan words whereas $abab$, $abccab$, $abcddcab$
are not. The set of all Catalan word of length $2k$ will be denoted by $\mathcal{C}_{2k}$. 
There is a bijection between Catalan words and non-crossing pair partitions of the set $\{1,2,\cdots,2k\}$ whence it follows that
\begin{equation}
\#\mathcal{C}_{2k}=\frac{1}{k+1}\binom{2k}{k},
\end{equation}
the ubiquitous Catalan number from combinatorics.

 By the theory developed in Section~\ref{sec:shproduct}, it suffices to compute $p_Z(w,w')$ for different combination of word pairs $(w,w')\in \mathcal{W}_{2k}^2$. Note that $\pi \in \Pi_X^*(w)\cap \Pi_Y^*(w')$ means that we have exactly $2k$ constraints on the vertices $\pi(i)$, with each word giving rise to $k$ constraints. To elaborate, each $w$-match $(i,j)$ gives rise to the restriction
\[
L_X(\pi(i-1),\pi(i))=L_X(\pi(j-1),\pi(j)),
\]
and each $w'$-match $(k,l)$ gives rise to the restriction
\[
L_Y(\pi(k-1),\pi(k))=L_Y(\pi(l-1),\pi(l)).
\]
Thus we expect that if $w\neq w'$ and the two links functions $L_X$ and $L_Y$ behave nicely, then we will have more than $k$ independent constraints so that $\#\Pi_X^*(w)\cap \Pi_Y^*(w')=O(n^k)$ and a fortiori $p_Z(w,w')=0$. We shall call two link functions $L_X$ and $L_Y$, which satisfy Property B, \emph{compatible} if, for $w\neq w'$, we have $p_Z(w,w')=0$. We shall also write $(L_X,L_Y)\rightsquigarrow L_W$ if 
\[
p_Z(w,w)=\begin{cases}
1, & \text{if }w\in \mathcal{C}_{2k}\\
0, & \text{otherwise.}
\end{cases}
\]
\begin{prop}\label{thm:semi_circlular}
Suppose $L_X$ and $L_Y$ are compatible and $(L_X,L_Y)\rightsquigarrow L_W$. Then the LSD of $\{n^{-1/2}X_n\odot Y_n\}$ is the semi-circular law.
\end{prop}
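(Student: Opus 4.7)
The plan is to feed the two hypotheses directly into the moment formula supplied by Theorem~\ref{thm:LSD}. By Lemma~\ref{lem:three_or_more}, all odd moments of the limit vanish, and by Theorem~\ref{thm:LSD} the even limit moments are
\[
\beta_{2k}=\sum_{(w,w')\in\mathcal{W}_{2k}\times\mathcal{W}_{2k}}p_Z(w,w'),
\]
provided each $p_Z(w,w')$ exists. The two assumptions tell us exactly which word pairs are nontrivial: compatibility kills every off-diagonal pair $w\neq w'$, and the relation $(L_X,L_Y)\rightsquigarrow L_W$ fixes $p_Z(w,w)$ to be $1$ when $w$ is Catalan and $0$ otherwise. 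So the existence of $p_Z(w,w')$ for every pair is part of the hypotheses (explicitly for $w=w'$, and as the statement $p_Z(w,w')=0$ for $w\neq w'$), and Theorem~\ref{thm:LSD} applies.

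Next I would collect the surviving contributions. After using compatibility and the $\rightsquigarrow L_W$ condition,
\[
\beta_{2k}=\sum_{w\in\mathcal{W}_{2k}}p_Z(w,w)=\sum_{w\in\mathcal{C}_{2k}}1=\#\mathcal{C}_{2k}=\frac{1}{k+1}\binom{2k}{k}.
\]
These are the Catalan numbers, which are precisely the even moments of the standard semi-circular law (the odd moments already being zero). Since the semi-circular law is uniquely determined by its moments and Carleman's condition is verified inside Theorem~\ref{thm:LSD}, the LSD of $\{n^{-1/2}X_n\odot Y_n\}$ equals $\mathcal{L}_W$.

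There is essentially no obstacle here beyond checking that the hypotheses of Theorem~\ref{thm:LSD} are in force: Property B for $L_X$ and $L_Y$ is implicit in the definition of ``compatible'' used in the preceding discussion, and the word-limit $p_Z(w,w')$ exists for every pair $(w,w')\in\mathcal{W}_{2k}^2$ by the two assumptions. The almost sure (rather than in probability) mode of convergence is inherited from Theorem~\ref{thm:LSD} via the Appendix verification of condition $(ii')$ mentioned in Remark~\ref{rem:asinappendix}. Thus the proof reduces to a one-line substitution into the moment formula, with the Catalan-number identification of the semi-circular moments doing the rest.
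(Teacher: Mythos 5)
Your proof is correct and follows essentially the same route as the paper: both substitute the compatibility hypothesis (which kills all off-diagonal word pairs) and the $(L_X,L_Y)\rightsquigarrow L_W$ hypothesis (which reduces the diagonal sum to a count of Catalan words) into the moment formula from Theorem~\ref{thm:LSD}, identifying $\beta_{2k}=\#\mathcal{C}_{2k}$ as the semi-circular moments. Your additional remarks on why Theorem~\ref{thm:LSD} applies are accurate but not a different argument.
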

\begin{proof}
We have
\begin{align*}
\beta_{2k}^Z &=\sum_{(w,w')\in \mathcal{W}_{2k}^2}p_Z(w,w')\\
&= \sum_{w\in\mathcal{C}_{2k}}p_Z(w,w)=\#\mathcal{C}_{2k}.
\end{align*}
This completes the proof because the $2k$-th moment of the semi-circular law is $\#\mathcal{C}_{2k}$.
\end{proof}
We write $(L_X,L_Y)\Rightarrow L_W$ if $L_X$ and $L_Y$ together determine the Wigner link function $L_W$ in the sense that $L_X(i,j)=L_X(k,l)$ and $L_Y(i,j)=L_Y(k,l)$ together imply that $L_W(i,j)=L_W(k,l)$. 
\begin{lemma}\label{lem:gen_lemma}
If $(L_X,L_Y)\Rightarrow L_W$, then
$(L_X,L_Y)\rightsquigarrow L_W$.
\end{lemma}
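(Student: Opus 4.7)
The plan is to identify $\Pi^*_Z(w,w)$ with $\Pi^*_W(w)$ as sets of circuits, and then quote the well-known word count for the Wigner matrix. Since $(L_X, L_Y) \rightsquigarrow L_W$ only constrains the diagonal counts $p_Z(w,w)$, it is enough to analyse this case.

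First I would argue the inclusion $\Pi^*_W(w) \subseteq \Pi^*_Z(w,w) = \Pi^*_X(w) \cap \Pi^*_Y(w)$. The key observation is that because the matrices $X_n$ and $Y_n$ are symmetric, their link functions satisfy $L_X(i,j) = L_X(j,i)$ and $L_Y(i,j) = L_Y(j,i)$, so $L_X$ and $L_Y$ depend only on the unordered pair $\{i,j\}$. Now if $\pi \in \Pi^*_W(w)$ and $w[i] = w[j]$, then $L_W(\pi(i-1),\pi(i)) = L_W(\pi(j-1),\pi(j))$, which by definition of $L_W$ means $\{\pi(i-1),\pi(i)\} = \{\pi(j-1),\pi(j)\}$. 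By symmetry this forces $L_X$ and $L_Y$ to agree on these edges, giving $\pi \in \Pi^*_X(w) \cap \Pi^*_Y(w)$.

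Next I would handle the reverse inclusion $\Pi^*_X(w) \cap \Pi^*_Y(w) \subseteq \Pi^*_W(w)$, which is where the hypothesis $(L_X, L_Y) \Rightarrow L_W$ enters. If $\pi$ lies in the intersection and $w[i] = w[j]$, then $L_X$ and $L_Y$ take identical values on the two corresponding edges; the hypothesis then forces $L_W$ to agree as well, placing $\pi$ in $\Pi^*_W(w)$.

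Combining the two inclusions gives $\Pi^*_Z(w,w) = \Pi^*_W(w)$ on the nose, so $p_Z(w,w) = p_W(w)$. Finally I would invoke the standard fact (used e.g. in \citet{bose2008another}) that for the Wigner link function $p_W(w) = 1$ if $w \in \mathcal{C}_{2k}$ and $p_W(w) = 0$ otherwise, which is exactly the definition of $(L_X, L_Y) \rightsquigarrow L_W$. There is no real obstacle here; the only point that requires care is remembering to invoke symmetry of $L_X$ and $L_Y$ to get the first inclusion, since the hypothesis alone only supplies the reverse direction.
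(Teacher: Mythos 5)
Your proof is correct and follows essentially the same route as the paper: the hypothesis $(L_X,L_Y)\Rightarrow L_W$ gives $\Pi^*_X(w)\cap\Pi^*_Y(w)\subseteq\Pi^*_W(w)$, the symmetry of the link functions gives the reverse inclusion, and the known Wigner word counts $p_W(w)=\mathbb{1}(w\in\mathcal{C}_{2k})$ finish the argument. Your spelling out of why symmetry yields $\Pi^*_W(w)\subseteq\Pi^*_X(w)$ (via injectivity of $L_W$ on unordered pairs) is a slightly more explicit version of a step the paper merely asserts.
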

\begin{proof}
Suppose that $\pi \in \Pi_X^*(w)\cap \Pi_Y^*(w)$. Then
\begin{align*}
& w[i]=w[j]\\
\Rightarrow & L_X(\pi(i-1),\pi(i))=L_X(\pi(j-1),\pi(j)) \text{ and }  L_Y(\pi(i-1),\pi(i))=L_Y(\pi(j-1),\pi(j))\\
\Rightarrow & L_W(\pi(i-1),\pi(i))=L_W(\pi(j-1),\pi(j)).
\end{align*}
Therefore $\pi\in\Pi_W^*(w)$ and as a consequence $\Pi_X^*(w)\cap \Pi_Y^*(w)\subseteq \Pi_W^*(w)$. The other inclusion is always true because $L_X$ and $L_Y$ are symmetric link functions so that $\Pi_W^*(w)\subseteq \Pi_X^*(w)$ and $\Pi_W^*(w)\subseteq \Pi_Y^*(w)$. Therefore $\Pi_X^*(w)\cap \Pi_Y^*(w)= \Pi_W^*(w)$ which means that we have
\[
p_Z(w,w)=p_W(w),
\]
for each $w\in\mathcal{W}_{2k}$. As for the Wigner matrix
\[
p_W(w)=\begin{cases}
1, & \text{if }w\in \mathcal{C}_{2k}\\
0, & \text{otherwise,}
\end{cases}
\]
the proof is now complete.
\end{proof}

We shall establish below that the Schur-Hadamard product of Toeplitz and Hankel has the semi-circular LSD by verifying the conditions of Proposition~\ref{thm:semi_circlular}. See Figure~\ref{fig:toephank}.
\begin{figure}[t!]
 \centering
 \includegraphics{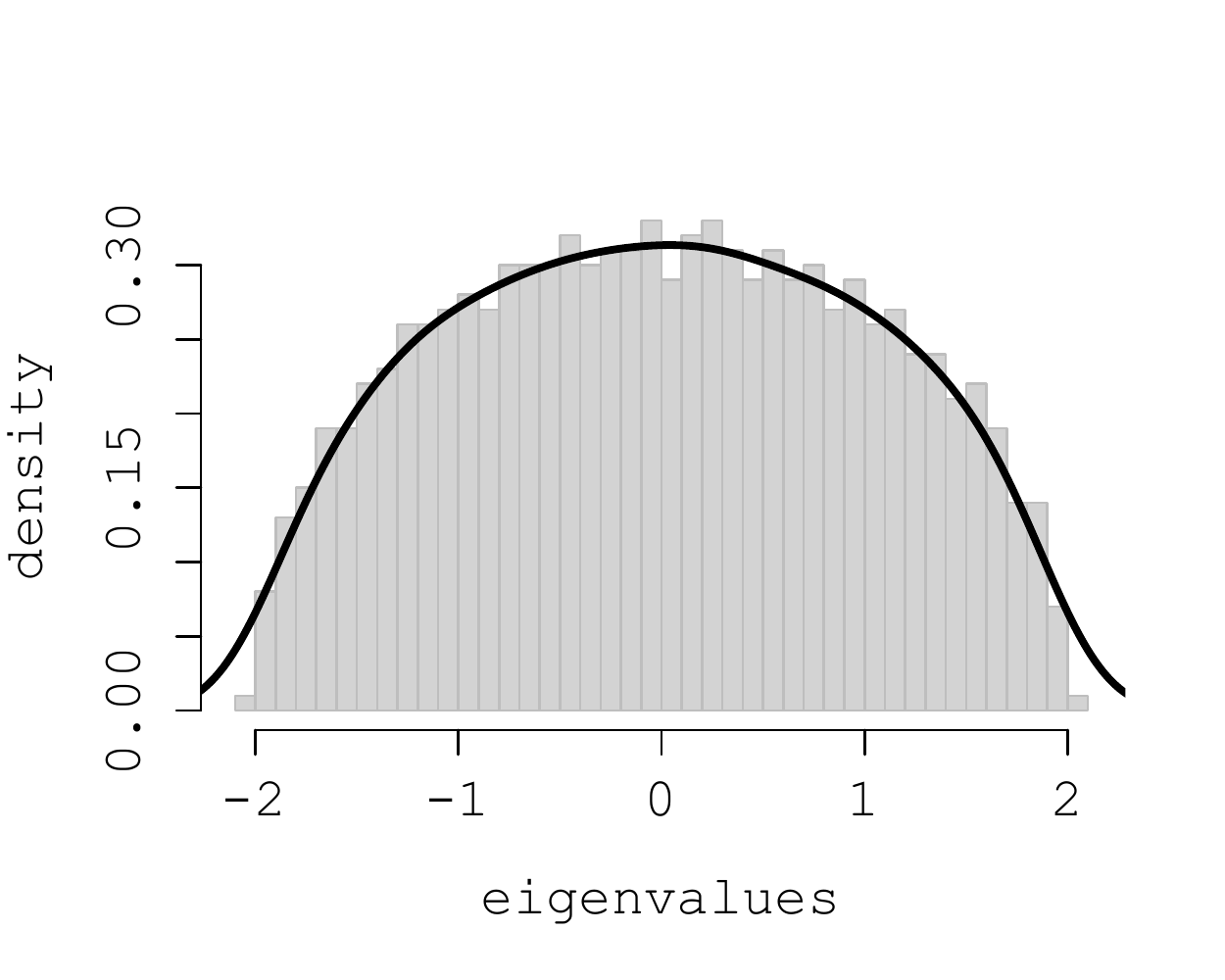}
 \caption{\footnotesize{Histogram and kernel density estimate for the ESD of $n^{-1/2}X_n \odot Y_n$, where $n=1000$, $X_n$ is a random Toeplitz matrix, $Y_n$ is a random Hankel matrix, they are independent and have $\mathcal{N}(0,1)$ entries.}}
 \label{fig:toephank}
 \end{figure}
 
We first make a simplification. Let $s(i)=\pi(i)-\pi(i-1)$. Define
 \[
 \Pi'(w)=\{\pi \mid w[i]=w[j]\Rightarrow s(i)+s(j)=0\}.
 \] 
 From \citet{bose2008another} it is known that for the Toeplitz matrix,
 \[
 p(w)=\lim_n \frac{1}{n^{1+k}}\#\Pi'(w).
 \]
 Therefore, we need only look at $\Pi_X'(w) \cap \Pi_Y^*(w')$ and if the limit exists we have
 \[
 p_Z(w,w')=\lim_n \frac{1}{n^{1+k}}\#\Pi_X'(w) \cap \Pi_Y^*(w').
 \]
 We shall use this in the following lemma.
\begin{lemma}\label{lem:toep_hank_non_eq}
$L_T$ and $L_H$ are compatible.
\end{lemma}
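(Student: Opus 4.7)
The goal is to show $p_Z(w,w')=0$ whenever $w\neq w'$ are pair-matched words of length $2k$. By the reduction recorded just before the lemma it suffices to prove the combinatorial bound $\#(\Pi'_T(w)\cap\Pi^*_H(w')) = o(n^{k+1})$. My plan is to parametrise $\Pi'_T(w)$ efficiently, translate each Hankel constraint imposed by $w'$ into a linear equation in these parameters, and exhibit one such equation that is nontrivial whenever $w\neq w'$.

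For the parametrisation, write $s(i) = \pi(i)-\pi(i-1)$. For a $w$-match pair $P=\{a,b\}$ with $a<b$, membership in $\Pi'_T(w)$ forces $s(b) = -s(a)$; setting $t_P := s(a)$ exhibits $\pi$ as determined by the initial vertex $\pi(0)$ together with the $k$ free values $(t_P)_{P\in M_w}$ (closure $\pi(2k)=\pi(0)$ is automatic since each pair contributes $t_P + (-t_P) = 0$). A $w'$-match $(c,d)$ with $c<d$ now contributes the Hankel relation $\pi(c-1)+\pi(c) = \pi(d-1)+\pi(d)$; using $\pi(i) = \pi(0) + \sum_{j=1}^{i} s(j)$ the offset $\pi(0)$ cancels and the relation becomes
\[
s(c) + s(d) + 2\sum_{i=c+1}^{d-1} s(i) = 0,
\]
which, after substituting $s(i) = \pm t_{P(i)}$ according to whether $i$ is the smaller or larger element of its $w$-pair, is a homogeneous linear equation in the $t_P$'s.

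The decisive step is to show that if $(c,d)$ is a $w'$-match but not a $w$-match, then this equation is nontrivial. I would read off the coefficient of $t_{P(c)}$, where $P(c) = \{a,c\}$ and $a \neq d$ by hypothesis. A brief case analysis on the position of $a$ relative to the interval $[c,d]$ yields coefficient $\pm 1$ in every case: if $a$ lies outside $[c,d]$, only the endpoint $s(c)$ contributes, giving $\pm 1$; if $a$ lies in the open interval $(c,d)$, then $s(a)$ appears in the interior sum with weight $2$ and sign opposite to $s(c)$ (since $s(a)=-s(c)$), so the net coefficient is $\pm 1 \mp 2 = \mp 1$. In either case the coefficient is nonzero, so this Hankel equation is linearly independent of the $k$ Toeplitz relations. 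The joint solution space therefore has dimension at most $k$, giving $\#(\Pi'_T(w)\cap\Pi^*_H(w')) = O(n^k)$ and hence $p_Z(w,w')=0$.

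The main obstacle is the coefficient bookkeeping in the last step: one must carefully track the sign convention for $s(c)$ (which flips depending on whether $c$ is the smaller or larger element of its $w$-pair) together with the location of the $w$-partner $a$ relative to $[c,d]$, and verify that the endpoint and interior contributions never conspire to cancel. Once this sign analysis is carried out, the rest of the argument is immediate.
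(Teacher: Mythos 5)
Your proof is correct, and it reaches the bound $\#(\Pi'_T(w)\cap\Pi^*_H(w'))=O(n^k)$ by a genuinely different route than the paper. The paper also hunts for ``one extra constraint,'' but it works directly with the vertices $\pi(i)$ and their generating/non-generating status: it first treats the case where the generating sets $S_T$ and $S_H$ differ (a vertex generating for one word is non-generating for the other), and in the harder case $S_T=S_H$ it eliminates $\pi(j)$ from the paired Toeplitz and Hankel match equations to get $2\pi(j-1)=\pi(i)-\pi(i-1)+\pi(i^*)+\pi(i^*-1)$, and then --- because $\pi(j-1)$ may itself be non-generating --- runs an iterative chain $i_1,i_2,\dots$ of further eliminations until a generating vertex is expressed in terms of earlier ones. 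You instead pass to the slope coordinates $t_P=s(\min P)$, in which the $k$ Toeplitz constraints are already solved and $\Pi'_T(w)$ is transparently of size $O(n^{k+1})$, and then you show that any $w'$-match $(c,d)$ that is not a $w$-match yields the homogeneous relation $s(c)+s(d)+2\sum_{i=c+1}^{d-1}s(i)=0$ whose coefficient on $t_{P(c)}$ is $\pm1$ (your endpoint-versus-interior sign check is right, and the hypothesis $a\neq d$ is exactly what rules out cancellation). Since that coefficient is a unit, one free parameter is determined by the others and the count drops to $O(n^k)$. This eliminates both the two-case split on $S_T$ versus $S_H$ and the chain argument, and makes the ``extra independent constraint'' fully explicit as a nonvanishing coefficient; the paper's vertex-level formulation, on the other hand, is phrased so that Remark~\ref{rem:others} can reuse it almost verbatim for the Symmetric Circulant and Reverse/Doubly Symmetric Hankel variants, where the match relations hold only modulo $n$ --- though your argument also adapts there, since $s(i)+s(j)\in\{0,\pm n\}$ just replaces one linear equation by finitely many affine ones, each still cutting the count by a factor of $n$.
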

\begin{proof}
We shall show that if $w\neq w'$, then $\#\Pi_X'(w)\cap \Pi_Y^*(w')=O(n^k)$.  This would imply that 
\[
p_Z(w,w')=0, \text{ if $w\neq w'$}.
\]
To do this it is enough to show that in addition to the $k$ constraints on the choices of $\pi$ arising from the Toeplitz link function (or the Hankel link function), there is at least one more additional constraint. Note that for Toeplitz or Hankel link functions, the natural constraints arising from matches enable one to express each non-generating vertex $\pi(j)$ as a linear combination of the generating vertices $\pi(i)$ preceding it (i.e., $i<j$). We shall show that if we combine the $2k$ constraints corresponding to $(w,w')$, then we can write some generating vertex $\pi(i)$ as a linear combination of the preceding generating vertices. This will be the extra constraint we are seeking. 

Consider the positions where new letters (letters appearing for the first time) appear. Note that the positions of the new letters fix their pattern and also fix the position of the old letters but not their pattern. Suppose the positions of the new letters are not all same in $w$ and $w'$. Let $i$ be the \textit{first} place where a new letter appears in (say) $w$ but an old letter appears in $w'$. So $\pi(i)$ is a generating vertex for $\Pi_T(w)$, but for $\Pi_H(w')$ it is non-generating, so that we can express $\pi(i)$ as a linear combination of generating vertices $\pi(k)$, $k<i$, $k\in S_H$. but note that prior to $i$ the generating vertices in $S_T$ and $S_H$ are same (indeed $i$ is the first position where there is a difference). Therefore, we can express $\pi(i)$ as a linear combination of generating vertices $\pi(k)$, $k<i$, $k\in S_T$, which is an extra constraint. 

Now suppose that all the new letters appear at the same positions in $w$ and $w'$ so that $S_T=S_H=S$, say. Now, since $w\neq w'$, there exists $j$ such that there are $i,i^*\in S$ both less than $j$ with $i\sim^T j$ and $i^*\sim^H j$. Assume that $j$ is the first letter of this type. Further without loss of generality we may assume that $i^*<i$. Then we have the following two constraints:
\begin{align}
\pi(i)-\pi(i-1)&=\pi(j-1)-\pi(j) \text{, and}\\
\pi(i^*)+\pi(i^*-1)&=\pi(j)+\pi(j-1).
\end{align}
Eliminating $\pi(j)$ from these two constraints we arrive at 
\begin{equation}\label{cons:1}
2\pi(j-1)=\pi(i)-\pi(i-1)+\pi(i^*)+\pi(i^*-1).
\end{equation}
\textbf{Case I: ($i=j-1$).} In this case (\ref{cons:1}) becomes
\[
\pi(i)=-\pi(i-1)+\pi(i^*)+\pi(i^*-1),
\]
which is an additional constraint because we are being able to express the generating vertex $\pi(i)$ as a linear combination of generating vertices $\pi(k)$ with $k<i$.\\\\
\textbf{Case II: ($i<j-1$).} If $\pi(j-1)$ is a generating vertex, then again we have an extra constraint because via (\ref{cons:1}) we are able to express the generating vertex $\pi(j-1)$ as a linear combination of generating vertices $\pi(k)$ with $k<j-1$. So suppose that $\pi(j-1)$ is non-generating. Then there exists $i_1\in S$ such that $i_1 \sim^T (j-1)$ and $i_1 \sim^H (j-1)$ (recall that $j$ is assumed to be the first index where the words $w$ and $w'$ differ). This implies that we have
\begin{align*}
\pi(i_1)-\pi(i_1-1) & =\pi(j-2)-\pi(j-1) \text{, and}\\
\pi(i_1)+\pi(i_1-1) & =\pi(j-2)+\pi(j-1),
\end{align*}
which simplify to 
\begin{align*}
\pi(i_1)& =\pi(j-2) \text{, and}\\
\pi(i_1-1) & =\pi(j-1).
\end{align*}
Thus (\ref{cons:1}) becomes
\begin{equation}\label{cons:2}
2\pi(i_1-1)=\pi(i)-\pi(i-1)+\pi(i^*)+\pi(i^*-1).
\end{equation}
If $i_1-1\leqslant i$, using (\ref{cons:2}) we can express $\pi(i)$ as a linear combination of generating vertices $\pi(k)$ with $k<i$ thus giving rise to an extra constraint. On the other hand if $i_1-1>i$ and $i_1-1\in S$ then (\ref{cons:2}) gives an extra constraint where $\pi(i_1-1)$ is expressed as a linear combination of generating vertices $\pi(k)$ with $k<i_1-1$. Finally, if $i_1-1>i$ and $i_1-1\notin S$, then, by the same argument as above with the role of $j-1$ being played by $i_1-1$, we can find $i_2\in S$ such that $i_2 \sim^T (i_1-1)$ and $i_2 \sim^H (i_2-1)$ and so on. It is clear that if we continue this procedure, then at some point we will obtain $i_m\in S$, $m\geqslant 1$ such that $i_m \sim^T (i_{m-1}-1)$ and $i_m \sim^H (i_{m-1}-1)$ and $i_m-1\leqslant i$ and so we will be able to express $\pi(i)$ as a linear combination of generating vertices $\pi(k)$ with $k<i$, thus obtaining an extra constraint.
\end{proof}
\begin{lemma}\label{lem:toep_hank_eq}
$(L_T,L_H)\rightsquigarrow L_W$.
\end{lemma}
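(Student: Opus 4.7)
The plan is to reduce this to Lemma~\ref{lem:gen_lemma} by verifying the implication $(L_T,L_H) \Rightarrow L_W$, which is a short algebraic check.

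Concretely, I would show the following: if $|i-j|=|k-l|$ and $i+j=k+l$, then $\{i,j\}=\{k,l\}$. There are two cases according to the sign of $i-j$ versus $k-l$. If $i-j=k-l$, adding this to $i+j=k+l$ gives $i=k$ and hence $j=l$. If $i-j=-(k-l)=l-k$, adding gives $i=l$, so $j=k$. In either case the unordered pair $\{i,j\}$ equals $\{k,l\}$, i.e. $L_W(i,j)=L_W(k,l)$. This establishes $(L_T,L_H)\Rightarrow L_W$.

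Having done that, I would quote Lemma~\ref{lem:gen_lemma} directly, which immediately delivers $(L_T,L_H) \rightsquigarrow L_W$, i.e., for every pair-matched word $w \in \mathcal{W}_{2k}$,
\[
p_Z(w,w)=\begin{cases} 1, & w\in\mathcal{C}_{2k},\\ 0, & \text{otherwise.}\end{cases}
\]

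I do not expect any real obstacle here; the whole content is the elementary observation that the two equations $i-j=\pm(k-l)$ and $i+j=k+l$ together pin down the unordered pair $\{i,j\}$. All of the combinatorial heavy lifting (reducing the problem to the implication $(L_X,L_Y)\Rightarrow L_W$ and comparing $\Pi^*_X(w)\cap\Pi^*_Y(w)$ to $\Pi^*_W(w)$) has already been done in Lemma~\ref{lem:gen_lemma}, so this lemma essentially amounts to noting that Toeplitz and Hankel together ``see'' the full Wigner matching structure on the diagonal pair $(w,w)$.
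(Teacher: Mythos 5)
Your proof is correct, and it coincides with the route the paper itself endorses in the first sentence of its proof: since $(L_T,L_H)\Rightarrow L_W$, Lemma~\ref{lem:gen_lemma} applies directly, and your case analysis ($i-j=k-l$ versus $i-j=l-k$, combined with $i+j=k+l$) is exactly the elementary verification needed to establish that implication. The only difference is that the paper, after noting this one-line derivation, deliberately writes out an \emph{alternate} argument that never uses the implication $(L_X,L_Y)\Rightarrow L_W$: it works directly with the double-letter reduction of Catalan words to show $\Pi^*_Y(w)\subseteq \Pi'_X(w)$ when $w\in\mathcal{C}_{2k}$, and produces an extra constraint for non-Catalan $w$. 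That alternate argument is not needed for the Toeplitz--Hankel case, but it is what allows Remark~\ref{rem:others} to extend the conclusion to pairs such as $(L_T,L_{RC})$, where $(L_T,L_{RC})\nRightarrow L_W$ and your shortcut is unavailable. So for this lemma you lose nothing, but be aware that the reduction to Lemma~\ref{lem:gen_lemma} does not cover the remaining entries of the second row of Table~\ref{table2}, which is precisely why the paper presents the longer argument.
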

\begin{proof}
As $(L_T,L_H)\Rightarrow L_W$, Lemma~\ref{lem:gen_lemma} directly applies. However, we give here an alternate argument that applies to some cases where $(L_X,L_Y)\nRightarrow L_W$ (for example, note that $(L_T,L_{RC})\nRightarrow L_W$ but one can show that $(L_T,L_{RC})\rightsquigarrow L_W$, see Remark~\ref{rem:others} below).
 
Fix $w\in\mathcal{C}_{2k}$. Suppose that we have a double letter at position $i$, i.e., $w[i]=w[i+1]$. Suppose $\pi \in \Pi_Y^*(w)$. Then we have 
\[
\pi(i-1)+\pi(i)=\pi(i)+\pi(i+1).
\]
So $\pi(i-1)=\pi(i+1)$. This implies that
\[
s(i)=\pi(i)-\pi(i-1)=\pi(i)-\pi(i+1)=-s(i+1).
\]
Now, deleting this double letter, i.e., identifying $\pi(i-1)$ and $\pi(i)$, we are left with a Catalan word $\hat{w}$ of length $2k-2$ and the reduced circuit $\hat{\pi}\in \Pi_Y^*(\hat{w})$. It has a double letter and we can repeatedly use the above argument until the whole word is emptied. What this argument gives is this: $w[i]=w[j]\Rightarrow s(i)+s(j)=0$. So $\pi\in \Pi_X'(w)$. Therefore $\Pi_Y^*(w)\subseteq \Pi_X'(w)$. So $\Pi_X'(w) \cap \Pi_Y^*(w)=\Pi_Y^*(w)$ and therefore $p_Z(w,w)$ exists and equals $p_Y(w)=1$.

Now fix $w\in \mathcal{W}_{2k}$. Consider the match $(i,j)$ where $j$ is the position of the first old letter. We have 
\begin{align*}
\pi(i)-\pi(i-1) & =\pi(j-1)-\pi(j) \text{, and}\\
\pi(i)+\pi(i-1) & =\pi(j-1)+\pi(j),
\end{align*}
which simplify to 
\begin{align}
\pi(i)& =\pi(j-1) \text{, and}\label{cons:3}\\
\pi(i-1) & =\pi(j)\label{cons:4}.
\end{align}
But by definition of $j$, $\pi(j-1)$ is generating, so (\ref{cons:3}) is a new constraint unless $i=j-1$, in which case $(i,j)$ is a double letter and (\ref{cons:4}) becomes $\pi(i-1)=\pi(i+1)$, which is an automatic constraint in both $\Pi_X'(w)$ and $\Pi_Y^*(w)$. Delete this double letter and apply the above argument on the reduced word. Clearly, if $w$ is non-Catalan, at some point we will be left with a non-empty word with no double letters, thus getting an extra constraint and therefore we will have $p_Z(w,w)=0$. 
\end{proof}
\begin{remark}\label{rem:others}
If we take $X_n$ as Symmetric Circulant, then Lemmas \ref{lem:toep_hank_non_eq} and \ref{lem:toep_hank_eq} continue to hold. Indeed, if for a pair matched word $w$ we define
\[
\Pi_X'(w)=\{\pi\mid w[i]=w[j]\Rightarrow s(i)+s(j)=0,\pm n\},
\]
then, from \citet{bose2008another}, we know that
\[
p_X(w)=\lim_n \frac{1}{n^{1+k}}\#\Pi_X'(w)=1.
\]
One can readily see that the proof of Lemmas \ref{lem:toep_hank_non_eq} and \ref{lem:toep_hank_eq} goes through in this case with minor modifications. Similarly, we can take $Y_n$ as Reverse Circulant or Doubly Symmetric Hankel. Therefore, the conclusion of Theorem~\ref{thm:semi_circlular} hold in these cases as well and thus we completely  obtain the second row of Table~\ref{table2}.
\end{remark}

\begin{appendices}
\section{Two counting lemmas}\label{sec:twolemma}
Recall that $K^L_{h,t}$ is the set of $t$-tuples of circuits $(\pi_1,\cdots,\pi_t)$ of length $h$ such that they are jointly and across $L$-matched. 
If $L$ satisfies Property $B$, then Lemma 2(a) of \citet{bose2008another} says that
\begin{equation}\label{eq:t=4}
\#K^L_{h,4}=O(n^{2h+2}).
\end{equation}
The arguments of \citet{bose2008another} are adaptations of those of \citet{bryc2006spectral} who proved this estimate for Toeplitz and Hankel matrices. One can modify these arguments to accommodate other values of $t$. For the reader's convenience we provide a proof for $t=2$ which we have used in the proof of Lemma~\ref{lem:inprob} and will be using in the proof of Lemma~\ref{lem:as}. We also state the version for $t=3$ without proof as it will be needed while proving Lemma~\ref{lem:as}.
\begin{lemma}\label{lem:count1}
If $L$ satisfies Property $B$, then 
\begin{align*}
\#K^L_{h,2}&=O(n^{h+1}) \text{, and}\\
\#K^L_{h,3}&=O(n^{\lfloor\frac{3h}{2}\rfloor+2}).
\end{align*}
\end{lemma}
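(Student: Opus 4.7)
The plan is to adapt the generating-vertex bookkeeping of Section~\ref{sec:prelims} to tuples of jointly matched circuits. Given $(\pi_1,\ldots,\pi_t)\in K^L_{h,t}$, I would concatenate the $th$ edges of the $t$ circuits into a single sequence (say, running through $\pi_1$'s edges first, then $\pi_2$'s, and so on). Joint $L$-matching forces the number $K$ of distinct $L$-values in this sequence to satisfy $K\leq\lfloor th/2\rfloor$. Declaring a vertex $\pi_j(i)$ to be \emph{generating} if $i=0$ or if the edge $\pi_j(i-1)\to\pi_j(i)$ introduces an $L$-value not previously encountered in the concatenated order, there are exactly $K+t$ generating vertices. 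Property B then ensures that every non-generating vertex is determined, up to $\Delta_L$ choices, by its predecessor and the incoming $L$-value, giving the naive bound $\#K^L_{h,t}\leq C\, n^{K+t}\leq C\, n^{\lfloor th/2\rfloor+t}$.

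The claimed bounds are one factor of $n$ better than this naive estimate, and I would recover this extra factor by invoking the circuit-closure identity $\pi_1(h)=\pi_1(0)$ in combination with the across-matching hypothesis. If position $h$ of $\pi_1$ is a first-occurrence position in the concatenated sequence, then $\pi_1(h)$ is itself a generating vertex and the identity pins it directly to $\pi_1(0)$, removing one free choice. Otherwise $\pi_1(h)$ is already Property B-determined from $\pi_1(h-1)$ and an earlier $L$-value, and the identity becomes a genuine additional relation among the generating vertices: across matching guarantees that the backward match-chain out of $\pi_1(h)$ must leave $\pi_1$ via an $L$-value shared with some other $\pi_j$, so the relation cannot reduce to a tautology following from $\pi_1$'s internal matches alone and must constrain some previously free generating vertex. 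In either case the free count drops to at most $K+1$, giving $\#K^L_{h,2}=O(n^{h+1})$ and, by the same single-closure argument, $\#K^L_{h,3}=O(n^{\lfloor 3h/2\rfloor+2})$.

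The hard part is the bookkeeping in the second sub-case above: one must show rigorously that the backward match-chain out of $\pi_1(h)$ really does meet an across-matched edge, so that the closure identity for $\pi_1$ is not already a consequence of $\pi_1$'s internal pair matches. Without across matching this step fails --- for instance, a self-pair-matched Toeplitz word has an automatic circuit closure, and no reduction is available. The detailed case analysis I have in mind mirrors Lemma~2(a) of \citet{bose2008another}, which adapts the Toeplitz-specific argument of \citet{bryc2006spectral} to general link functions with Property B; only minor notational modifications are required to cover the smaller values $t=2$ and $t=3$ treated here.
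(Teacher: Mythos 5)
Your setup---concatenating the $t$ circuits, counting $K\leq\lfloor th/2\rfloor$ distinct $L$-values and $K+t$ generating vertices, and observing that exactly one factor of $n$ must be recovered---matches the paper's, and your first sub-case (position $h$ of $\pi_1$ is a first-occurrence position, so the closure pins that generating vertex to $\pi_1(0)$) is sound. The gap is in your second sub-case, which is where the entire difficulty of the lemma sits. When edge $h$ of $\pi_1$ carries an old $L$-value, the identity $\pi_1(h)=\pi_1(0)$ is merely a consistency condition on a vertex that is already determined, up to $\Delta_L$ choices, from $\pi_1(h-1)$ and a known $L$-value; for a link function about which nothing is assumed beyond Property B, such a condition does not reduce the number of free generating vertices. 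Property B only bounds the number of choices of one endpoint of an edge given the other endpoint and the $L$-value; it provides no mechanism for propagating a constraint backward through a chain of already-determined non-generating vertices until it reaches, and thereby restricts, a generating one. Your phrases ``genuine additional relation among the generating vertices'' and ``cannot reduce to a tautology'' implicitly assume the linear structure of the Toeplitz and Hankel links, which is not available here. Moreover, the claim that the backward match-chain out of $\pi_1(h)$ ``must leave $\pi_1$'' is false in general: edge $h$ of $\pi_1$ may be $L$-matched internally to an earlier edge of $\pi_1$ while the across-matches sit at unrelated positions (for instance, $h=6$ with edges $1$ and $6$ matched to each other, edges $3$ and $4$ matched to each other, and edges $2$ and $5$ matched into $\pi_2$). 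Since you explicitly defer this bookkeeping to the literature, the proposal is incomplete exactly where the work lies.

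The paper's proof avoids this trap by a different mechanism. It stratifies by the number $u$ of distinct $L$-values and notes that for $u\leq h-1$ (in the $t=2$ case) the naive count $n^2\cdot n^u\cdot(\Delta_L)^{2h-u-2}=O(n^{u+2})$ already gives $O(n^{h+1})$, so only the extremal case $u=h$, in which every value occurs exactly twice, is delicate. There, across-matching produces an $L$-value occurring exactly once inside $\pi_1$, at some edge $(\pi_1(i-1),\pi_1(i))$; after choosing the other $u-1$ generating vertices, the circuit $\pi_1$ is filled from the left up to $\pi_1(i-1)$ and from the right, starting at $\pi_1(h)=\pi_1(0)$, down to $\pi_1(i+1)$. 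The vertex $\pi_1(i)$ is then the unknown endpoint of the edge $(\pi_1(i),\pi_1(i+1))$, whose $L$-value is matched elsewhere and hence fixed, so Property B leaves it only finitely many choices. This two-sided filling is the concrete device that converts the closure identity and the across-matching hypothesis into a saved factor of $n$, and it is the step your argument would need to supply.
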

\begin{proof}
 Consider all circuits ($\pi_1$, $\pi_2$) of length $h$ which are jointly $L$-matched and across $L$-matched. Consider all possible edges $(\pi_j(i-1),\pi_j(i))$, $1\leqslant j \leqslant 2$ and $1 \leqslant i \leqslant h$. Since the circuits are jointly and across $L$-matched, there are at most $h$ distinct $L$-values in these $2h$ edges.

Note that the number of partitions of the $2h$ edges into distinct groups of $L$-matching edges, with at least two edges in each group, is independent of $n$. So, for a fixed integer $1\leqslant u \leqslant h$, it is enough to establish the required estimate for the number of pairs of circuits for which there are exactly $u$ distinct $L$-values.

First assume that $1\leqslant u \leqslant h-1$. We count the total number of choices in the following way:

\begin{enumerate}
\item The generating vertices $\pi_1(0), \pi_2(0)$ may be chosen in total $n^2$ many ways.

\item  Now arrange the values $L(\pi_j(i-1),\pi_j(i))$, $1\leqslant j \leqslant 2$, $1 \leqslant i \leqslant h$ from left to right, starting with $\pi_1$ followed by $\pi_2$. Then the generating vertices $\pi_j(i)$, for which $L(\pi_j(i-1),\pi_j(i))$ is the first one of the distinct $L$-values in this sequence, have at most $n^u$ choices.

\item Having chosen these vertices, using Property B and $L$-matchings, the rest of the vertices in all the circuits may be chosen from left to right in at most $(\Delta_L) ^{2h-u-2}$ ways.  
\end{enumerate}

Now, since $u\leqslant h-1$, the total number of choices is bounded by
\[
n^2n^u(\Delta_L) ^{2h-u-2}=O(n^{u+2})= O(n^{h+1}).
\]

Now consider the case $u=h$. Then each $L$-value is shared by exactly two edges. Now we seek to identify one generating vertex that has only finitely many choices.

By reordering the two circuits if necessary, we have an $L$-value that is assigned, as the first and only one, to exactly one edge, say $(\pi_1(i-1),\pi_1(i))$ of $\pi_1$. Pick this $L$-value. The rest of the $(u-1)$ generating vertices may be chosen in at most $n^{u-1}=n^{h-1}$ ways. By the following dynamic construction of $\pi_1$ we show that $\pi_1(i)$ can have only finitely many choices:

Start with $\pi_1(0)$ and choose $\pi_1(j)$ till $j\leqslant i-1$, honouring the $L$-matches. Now start from the tail end of $\pi_1$, i.e., from $\pi_1(h)=\pi_1(0)$ and choose the vertices $\pi_1(j)$ in a right-to-left manner. When $\pi_1(i+1)$ is chosen, since the $L$-value $L(\pi_1(i),\pi_1(i+1))$ appears elsewhere, note that $\pi_1(i)$ can have only finitely many choices. Thus the total number of choices is bounded by
\[
n^2n^{u-1}(\Delta_L)^{2h-u-2+1}= O(n^{u+1})= O(n^{h+1}).
\]
This completes the proof in the case $t=2$. The proof in the case $t=3$ is an easy modification of the argument above and hence omitted.
\end{proof}
The following lemma will be repeatedly used in the verification of Condition $(ii')$ of Lemma~\ref{lem:main} in Lemma~\ref{lem:as} of Appendix~\ref{sec:asconv}. 
\begin{lemma}\label{lem:count2}
Consider two $h$-circuits $\pi_1$ and $\pi_2$. Suppose $\pi_1$ is pair-matched with respect to $L_X$ (which necessitates that $h$ be even) and shares no $L_X$ values with $\pi_2$. Also suppose that $\pi_1$ and $\pi_2$ share an $L_Y$ value. Then, contingent on the event that $\pi_2$ has been already chosen, one can choose $\pi_1$ in $O(n^{h/2})$ ways, honouring the stated constraints.
\end{lemma}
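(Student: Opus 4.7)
The plan is to fix finitely many discrete parameters in the problem, reducing to a constrained count on a single pair-matched word, and then to save one factor of $n$ in the standard generating-vertex enumeration by applying Property B for $L_Y$ in either a left-to-right or right-to-left construction of $\pi_1$.

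First, I would enumerate over the pair-matched word $w$ of length $h$ describing the $L_X$-equivalence class of $\pi_1$, the position $i_0 \in \{1,\ldots,h\}$ of the $L_Y$-shared edge in $\pi_1$, and the position $i_0' \in \{1,\ldots,h\}$ of the matching edge in $\pi_2$. All three choices are bounded independently of $n$, so it suffices to bound, for each fixed $(w, i_0, i_0')$, the number of $\pi_1 \in \Pi_X^*(w)$ satisfying $L_Y(\pi_1(i_0-1), \pi_1(i_0)) = t$, where the constant $t := L_Y(\pi_2(i_0'-1), \pi_2(i_0'))$ is determined by the already-fixed $\pi_2$. The standard upper bound $\#\Pi_X^*(w) = O(n^{h/2+1})$ is obtained by constructing $\pi_1$ vertex-by-vertex from left to right: each of the $h/2+1$ generating vertices ($\pi_1(0)$ together with one at each first-occurrence position of $w$) contributes at most $n$ choices, while each non-generating vertex contributes at most $\Delta_{L_X}$ choices by Property B. The task is therefore to save one additional factor of $n$ using the $L_Y$-constraint.

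I would split into two cases. In Case~(a), $w[i_0]$ is a first occurrence, so $\pi_1(i_0)$ is a generating vertex in the left-to-right construction. At step $i_0$ the vertex $\pi_1(i_0-1)$ is already determined, and the constraint $L_Y(\pi_1(i_0-1), \pi_1(i_0)) = t$ combined with Property B for $L_Y$ restricts $\pi_1(i_0)$ to at most $\Delta_{L_Y}$ choices in place of the usual $n$, yielding $O(n^{h/2})$. In Case~(b), $w[i_0]$ is a second (hence last) occurrence; here the left-to-right construction provides no direct benefit, since $\pi_1(i_0)$ is already constrained by an earlier $L_X$-match. I would instead use the mirror-image right-to-left construction, choosing $\pi_1(h)=\pi_1(0)$ first and then determining $\pi_1(h-1),\ldots,\pi_1(0)$ in reverse order. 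In this construction the generating vertices are $\pi_1(h)$ together with those $\pi_1(v)$ for which $w[v+1]$ is a last occurrence in $w$, still totalling $h/2+1$ vertices and giving the same base bound $O(n^{h/2+1})$. Since $w[i_0]$ is a last occurrence, $\pi_1(i_0-1)$ is generating in the right-to-left order; at the step that determines it, $\pi_1(i_0)$ is already known and Property B for $L_Y$ restricts $\pi_1(i_0-1)$ to at most $\Delta_{L_Y}$ choices, again saving the required factor of $n$.

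Summing over the finitely many choices of $(w, i_0, i_0')$ preserves the bound $O(n^{h/2})$, which is what we need. The principal subtlety lies in Case~(b): the $L_Y$-constraint is not directly usable at $\pi_1(i_0)$ in left-to-right order, and the remedy rests on the simple observation that reading the circuit in reverse interchanges the roles of first and last occurrences, turning the otherwise non-generating endpoint of the $L_Y$-shared edge into a generating vertex that can be effectively constrained. Note that the hypothesis that $\pi_1$ and $\pi_2$ share no $L_X$-value is not needed for the upper bound and only reduces the count further.
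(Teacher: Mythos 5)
Your proof is correct and follows essentially the same route as the paper's: both obtain the base bound $O(n^{h/2+1})$ from the $h/2+1$ generating vertices of the pair-matched circuit and then save one factor of $n$ by observing that the fixed $L_Y$-value of the shared edge, together with Property B for $L_Y$, leaves only finitely many choices for the endpoint of that edge which would otherwise be a free generating vertex, switching to a right-to-left filling of the circuit when necessary. Your explicit split into first-occurrence and last-occurrence cases is just a more careful rendering of the paper's ``without loss of generality'' remark about reversing the direction of construction.
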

\begin{proof}
Consider $\pi_1$ from left to right. There is one and hence a first index $i$ such that the $L_Y$-value $L_Y(\pi_1(i-1),\pi_1(i))$ appears in $\pi_2$. Now consider the $L_X$-matches on $\pi_1$. Since $\pi_1$ is pair-matched, it has $h/2+1$ generating vertices and therefore in absence of any further constraints one can choose $\pi_1$ in $O(n^{h/2+1})$ ways. We shall show that under the setup of the lemma one among these generating vertices has only finitely  many choices. Note that we may assume without loss of generality that $\pi_1(i)$ is a generating vertex with respect to $L_X$ (indeed, otherwise we may start filling the circuit from right to left and define generating vertices according to that order to ensure that $\pi_1(i)$ is generating). But now, since the value $L_Y(\pi_1(i-1),\pi_1(i))$ is fixed, after choosing $\pi_1(0), \cdots , \pi_1(i-1)$ with respect to $L_X$, there are only finitely many choices left for (the generating vertex) $\pi_1(i)$. This completes the proof.
\end{proof}
\section{Almost sure weak convergence}\label{sec:asconv}
The proof of almost sure weak convergence is presented in the following lemma.
\begin{lemma}\label{lem:as} 
Suppose $L_X$ and $L_Y$ satisfy Property B and the input sequences satisfy Assumption (A1). Then  $\{\beta_h(n^{-1/2}Z_n)\}$ satisfies Condition $(ii')$ of Lemma~\ref{lem:main} for any $h$. 
\end{lemma}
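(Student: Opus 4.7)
The plan is to adapt the second-moment bound of Lemma~\ref{lem:inprob} to the fourth central moment. Writing
\[
\E\big[\beta_h(n^{-1/2}Z_n) - \E\beta_h(n^{-1/2}Z_n)\big]^4 = \frac{1}{n^{2h+4}}\sum_{(\pi_1,\pi_2,\pi_3,\pi_4)} \E\prod_{j=1}^{4}(z_{\pi_j}-\E z_{\pi_j}),
\]
the goal is to show that the number of $4$-tuples of circuits contributing non-trivially to this sum is $O(n^{2h+2})$. Combined with the uniform boundedness of each summand under Assumption (A1), this yields $\E[\beta_h(n^{-1/2}Z_n) - \E\beta_h(n^{-1/2}Z_n)]^4 = O(n^{-2})$, whose series over $n$ converges, establishing $(ii')$.

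To identify the contributing tuples, I would apply the decomposition (\ref{decomp}) to each of the four factors, expand the product into the resulting $2^{4}=16$ summands, and use the independence of the $X$- and $Y$-input sequences to factor every expectation into an $X$-expectation times a $Y$-expectation. The same mean-zero arguments used in the proof of Lemma~\ref{lem:inprob}, applied separately to the $X$- and $Y$-factors, then show that a non-vanishing contribution requires (a) joint $L_X$-matching across the tuple, (b) joint $L_Y$-matching across the tuple, and (c) for every $j\in\{1,2,3,4\}$, the circuit $\pi_j$ is across-matched with some other $\pi_k$ in at least one of $L_X$ or $L_Y$. Indeed, were some $\pi_j$ self-matched in both link functions, then $z_{\pi_j}-\E z_{\pi_j}$ would be independent of the remaining three factors and have mean zero, killing the whole expectation.

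Let $T\subseteq\{1,2,3,4\}$ denote the subset of circuits that are across $L_X$-matched with at least one other member of the tuple; since sharing an $L_X$-value is a symmetric relation, $|T|\in\{0,2,3,4\}$. Partition the contributing tuples by $T$. When $T=\{1,2,3,4\}$ the tuple lies in $K^{L_X}_{h,4}$ and when $T=\emptyset$ in $K^{L_Y}_{h,4}$; Lemma~\ref{lem:count1} extended to $t=4$, equation~(\ref{eq:t=4}), bounds either case by $O(n^{2h+2})$. For the mixed cases $|T|\in\{2,3\}$, the sub-tuple $(\pi_j)_{j\in T}$ must itself be jointly and across $L_X$-matched (otherwise an outside circuit would share an $L_X$-value with $T$ and thereby join $T$), so it lies in $K^{L_X}_{h,|T|}$, again bounded by Lemma~\ref{lem:count1}. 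Each circuit outside $T$ is self-$L_X$-matched and, by (c), across-$L_Y$-matched with some other circuit in the tuple; Lemma~\ref{lem:count2} then yields $O(n^{h/2})$ choices for it when it is pair-matched in $L_X$ (conditioning on a previously fixed circuit that shares an $L_Y$-value), while Lemma~\ref{lem:three_or_more} yields $O(n^{\lfloor(h+1)/2\rfloor})$ choices when it carries some $L_X$-edge of order $\geq 3$. A short case check for $|T|\in\{2,3\}$ and both parities of $h$ multiplies these per-circuit estimates and confirms the uniform bound $O(n^{2h+2})$.

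The main obstacle will be the bookkeeping in these mixed-$T$ cases: one must fix an ordering of the four circuits so that every invocation of Lemma~\ref{lem:count2} is legitimate (the reference circuit sharing an $L_Y$-value with the outside circuit in question having already been chosen), and then verify that even in worst-case configurations (for example, when two outside circuits share $L_Y$-values only with each other so that one of them must be chosen with no usable $L_Y$-constraint and only the second benefits from Lemma~\ref{lem:count2}) the combined product of bounds still comes out to $O(n^{2h+2})$.
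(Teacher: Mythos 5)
Your proposal is correct and follows essentially the same route as the paper's proof: the same fourth-moment circuit expansion, the same reduction to tuples that are jointly matched in both link functions with every circuit across-matched in at least one of $L_X$ or $L_Y$, and the same counting via the bound (\ref{eq:t=4}) together with Lemmas~\ref{lem:count1} and \ref{lem:count2}. Your partition by the set $T$ of across-$L_X$-matched circuits is just a reorganization of the paper's Cases I--III, and you correctly isolate the worst configuration (two circuits sharing $L_Y$-values only with each other), which is the paper's Case III.
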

\begin{proof}
Using (\ref{eq:t=4}) in our context we have
\begin{align}\label{bound:quadruples}
\#(K^{L_X}_{h,4}\cup K^{L_Y}_{h,4}) & \leq  \#K^{L_X}_{h,4} + \#K^{L_Y}_{h,4} \\
\nonumber									& =  O(n^{2h+2}) +O(n^{2h+2}) \\
\nonumber									& = O(n^{2h+2}).
\end{align}
Now write
\begin{align}\label{repr:fourth}
\E[\beta_h(n^{-1/2}Z_n)-\E(\beta_h(n^{-1/2}Z_n))]^4
&= \E[n^{-1}\tr(n^{-1/2}Z_n)^h-\E(n^{-1}\tr(n^{-1/2}Z_n)^h)]^4\\
\nonumber&= \frac{1}{n^{2h+4}}\E[\tr Z_n^h-\E\tr Z_n^h]^4\\
\nonumber&=\frac{1}{n^{2h+4}}\sum_{(\pi_1,\pi_2,\pi_3,\pi_4)}\E(\prod_{j=1}^{4}z_{\pi_j}-\E z_{\pi_j}).
\end{align}
Therefore, using decomposition (\ref{decomp}) we have
\[
\prod_{j=1}^{4}(z_{\pi_j}-\E z_{\pi_j})  = \prod_{j=1}^{4}( (x_{\pi_j}-\E x_{\pi_j})(y_{\pi_j}-\E y_{\pi_j})+(y_{\pi_j}-\E y_{\pi_j})\E x_{\pi_j}+(x_{\pi_j}-\E x_{\pi_j})\E y_{\pi_j}).
\]
If $(\pi_1 , \pi_2 , \pi_3 , \pi_4)$ are not jointly $L_X$-matched, then one of the circuits, say $\pi_k$, has an
$L_X$-value which does not occur anywhere else. Therefore $\E x_{\pi_k}=0$. So
\[
z_{\pi_k}-\E z_{\pi_k}  = x_{\pi_k}y_{\pi_k},
\]
and
\begin{equation}\label{repr:1}
\prod_{j=1}^{4}(z_{\pi_j}-\E z_{\pi_j})  = x_{\pi_k}y_{\pi_k} \prod_{\begin{subarray}{1}
j=1\\
j\neq k
\end{subarray}}^{4} (x_{\pi_j}y_{\pi_j}-\E x_{\pi_j}\E y_{\pi_j}).
\end{equation}
Because of the independence of $X_n$ and $Y_n$ and of the input sequences we can conclude from this representation that
\[
\E\prod_{j=1}^{4}(z_{\pi_j}-\E z_{\pi_j})=0,
\]
since the input $X$-variable corresponding to the single $L_X$-value appears in the product (\ref{repr:1}) inside $x_{\pi_k}$ and is independent of every other term in the product. Therefore, in order to have a non-zero contribution, $(\pi_1 , \pi_2 , \pi_3 , \pi_4)$ have to be jointly $L_X$-matched and by the same argument jointly $L_Y$-matched.

Now suppose that $(\pi_1 , \pi_2 , \pi_3 , \pi_4)$ are jointly $L_X$ as well as $L_Y$-matched but neither across $L_X$-matched nor across $L_Y$-matched. Then there is a circuit, say $\pi_k$, which is only \emph{self $L_X$-matched}, i.e., none of its $L_X$-values is shared with those of the other circuits. Similarly, there is a circuit $\pi_l$ that is only self $L_Y$-matched. Now note that $(x_{\pi_k}-\E x_{\pi_k})$ is independent of $(x_{\pi_j}-\E x_{\pi_j})$ for $j\neq k$ and similarly $(y_{\pi_l}-\E y_{\pi_l})$ is independent of $(y_{\pi_j}-\E y_{\pi_j})$ for $j\neq l$. If $k=l$ (which is always the case in the setup of Theorem~\ref{thm:gen}), using these facts along with the independence of $X_n$ and $Y_n$ and the decomposition (\ref{decomp}) we can write
\begin{align*}
\E\prod_{j=1}^{4}(z_{\pi_j}-\E z_{\pi_j})= \E (x_{\pi_k}-\E x_{\pi_k}) & \E(y_{\pi_k})\E( \prod_{\begin{subarray}{1}
j=1\\
j\neq k
\end{subarray}}^{4} (z_{\pi_j}-\E z_{\pi_j}))\\& + \E (y_{\pi_k}-\E y_{\pi_k}) \E(x_{\pi_k})\E( \prod_{\begin{subarray}{1}
j=1\\
j\neq k
\end{subarray}}^{4} (z_{\pi_j}-\E z_{\pi_j}))\\
&=0.
\end{align*}
If $k\neq l$ then $\E\prod_{j=1}^{4}(z_{\pi_j}-\E z_{\pi_j})$ is not necessarily 0. However, since, by Assumption (A1), $\E\prod_{j=1}^{4}(z_{\pi_j}-\E z_{\pi_j})$ is bounded uniformly across all possible quadruples, it suffices to prove an $O(n^{2h+3-\delta})$ estimate, $\delta>0$, on the number of quadruples of circuits in the $k\neq l$ case. We shall prove such estimates (and we will not try to be optimal) in each of the following three possible cases:

\textbf{Case I}. $\pi_1$ is self $L_X$-matched, $(\pi_2, \pi_3, \pi_4)$ are across $L_X$-matched and  $\pi_2$ is self $L_Y$-matched, $(\pi_1, \pi_3, \pi_4)$ are across $L_Y$-matched. By Lemma~\ref{lem:count1}, if we just consider the $L_X$-matches, then $(\pi_2, \pi_3, \pi_4)$ can be chosen together in at most $O(n^{\lfloor 3h/2 \rfloor +2})$ many ways. So, if $\pi_1$ has at least one edge of order $\geqslant 3$, then, by (\ref{eq:three_or_more}), we can choose $\pi_1$ in $O(n^{\lfloor (h+1)/2\rfloor})$ ways. Thus, in this case, the total number choices for the quadruples $(\pi_1,\pi_2, \pi_3, \pi_4)$ is 
\[
\underbrace{O(n^{\lfloor \frac{3h}{2} \rfloor +2})}_{(\pi_2, \pi_3, \pi_4)}\underbrace{O(n^{\lfloor \frac{h+1}{2}\rfloor})}_{\pi_1}=O(n^{2h+\frac{5}{2}}).
\]
So we may assume that $\pi_1$ is pair-matched with respect to $L_X$ and by the same token $\pi_2$ is pair-matched with respect to $L_Y$.

Choose $(\pi_2, \pi_3, \pi_4)$ honouring the $L_X$-matches in  $O(n^{\lfloor 3h/2 \rfloor +2})$ ways. Now $\pi_1$ shares an $L_Y$ value either with $\pi_3$ or $\pi_4$, since $(\pi_1, \pi_3, \pi_4)$ are across $L_Y$-matched. Therefore, by Lemma~\ref{lem:count2} we can choose $\pi_1$ in $O(n^{h/2})$ ways. Therefore, the total number of choices for the quadruples $(\pi_1, \pi_2, \pi_3, \pi_4)$ is 
\[
\underbrace{O(n^{\lfloor 3h/2 \rfloor +2})}_{(\pi_2, \pi_3, \pi_4)}\underbrace{O(n^{\frac{h}{2}})}_{\pi_1}=O(n^{2h+2}).
\]

\textbf{Case II}. $\pi_1$, $\pi_2$ are self $L_X$-matched and $(\pi_3,\pi_4)$ are across $L_X$-matched while $\pi_3$ is self $L_Y$-matched and $(\pi_1,\pi_2,\pi_4)$ are across $L_Y$-matched. Note that we may again assume that both $\pi_1$ and $\pi_2$ are pair-matched with respect to $L_X$, because otherwise upon choosing $(\pi_3, \pi_4)$, honouring the $L_X$-matches, in $O(n^{h+1})$ ways (by Lemma~\ref{lem:count1}), we can choose both $\pi_1$ and $\pi_2$ in $O(n^{\lfloor \frac{h+1}{2}\rfloor})$ ways with respect to $L_X$, so that the total number of choices becomes
\[
\underbrace{O(n^{h+1})}_{(\pi_3,\pi_4)}\underbrace{O(n^{\lfloor \frac{h+1}{2}\rfloor})}_{\pi_2}\underbrace{O(n^{\lfloor \frac{h+1}{2}\rfloor})}_{\pi_1}=O(n^{2h+2}).
\]
 Choose $(\pi_3, \pi_4)$, honouring the $L_X$-matches, in $O(n^{h+1})$ ways. Now, since $(\pi_1,\pi_2,\pi_4)$ are across $L_Y$-matched, two possibilities might arise:
\begin{enumerate}
\item $\pi_1$ and $\pi_2$ both share an $L_Y$-value with $\pi_4$.
\item $\pi_1$, $\pi_2$ share an $L_Y$-value and $\pi_2$, $\pi_4$ share an $L_Y$-value.
\end{enumerate}
In the first case, since we have already chosen $\pi_4$, by Lemma~\ref{lem:count2} $\pi_1$ and $\pi_2$ both can be chosen in $O(n^{h/2})$ ways so that the total number of choices for the quadruples $(\pi_1, \pi_2, \pi_3, \pi_4)$ is 
\[
\underbrace{O(n^{h+1})}_{(\pi_3,\pi_4)}\underbrace{O(n^{\frac{h}{2}})}_{\pi_2}\underbrace{O(n^{\frac{h}{2}})}_{\pi_1}=O(n^{2h+1}).
\]

In the second case, again by Lemma~\ref{lem:count2}, we can choose $\pi_2$ in $O(n^{h/2})$ ways and thereafter $\pi_1$ in $O(n^{h/2})$ ways so that the total number of choices again becomes 
\[
\underbrace{O(n^{h+1})}_{(\pi_3,\pi_4)}\underbrace{O(n^{\frac{h}{2}})}_{\pi_2}\underbrace{O(n^{\frac{h}{2}})}_{\pi_1}=O(n^{2h+1}).
\]

\textbf{Case III}. $\pi_1$, $\pi_2$ are self $L_X$-matched and $(\pi_3,\pi_4)$ are across $L_X$-matched while $\pi_3$, $\pi_4$ are self $L_Y$-matched and $(\pi_1,\pi_2)$ are across $L_Y$-matched. Once again we may and will assume that both $\pi_1$ and $\pi_2$ are pair-matched with respect to $L_X$. Choose $(\pi_3,\pi_4)$ honouring the $L_X$ constraints in $O(n^{h+1})$ ways. Now choose $\pi_2$ in $O(n^{h/2+1})$ ways honouring the $L_X$ constraints. Since $(\pi_1,\pi_2)$ are across $L_Y$-matched and $\pi_2$ has been chosen, by Lemma~\ref{lem:count2} we can choose $\pi_1$ in $O(n^{h/2})$ ways. Thus, in this case, the total number of choices for the quadruples $(\pi_1, \pi_2, \pi_3, \pi_4)$ is 
\[
\underbrace{O(n^{h+1})}_{(\pi_3,\pi_4)}\underbrace{O(n^{\frac{h}{2}+1})}_{\pi_2}\underbrace{O(n^{\frac{h}{2}})}_{\pi_1}=O(n^{2h+2}).
\]

All the other types of quadruples of circuits are contained in  $K^{L_X}_{h,4}\cup K^{L_Y}_{h,4}$. Therefore, by what have been established so far, we conclude that
\[
\E[\beta_h(n^{-1/2}Z_n)-\E(\beta_h(n^{-1/2}Z_n))]^4=O(n^{-(1+\delta)}),
\]
for some suitable $\delta>0$, which completes the verification of Condition $(ii')$ of Lemma~\ref{lem:main}.
\end{proof}
\end{appendices}
\section{Acknowledgements}
We thank the anonymous referees for their comments and for pointing us to important literature that we had missed.

\bibliographystyle{chicago}
\bibliography{mybib}

\end{document}